\newcommand{\gt}{{g}_\tau}
\newcommand{\htt}{{H}_3(\tau)}
\newcommand{\nt}{{\nabla}^\tau}
\newcommand{\Rt}{{R}^\tau}
\newcommand{\lm}{\e}
\newcommand\R{{\mathbb R}}
\newcommand{\tq}{\tau^2}
\newcommand{\sh}{\sinh \vartheta}
\newcommand{\ch}{\cosh \vartheta}
\newcommand{\cph}{\cosh \varphi}
\newcommand{\sph}{\sinh \varphi}
\theoremstyle{plain}
\newtheorem{theorem}{Theorem}[section]
\newtheorem*{theorem*}{Theorem}
\newtheorem{defn}[theorem]{Definition}
\newtheorem{lemma}[theorem]{Lemma}
\newtheorem{prop}[theorem]{Proposition}
\newtheorem{cor}[theorem]{Corollary}
\newtheorem{rem}[theorem]{Remark}
\newtheorem*{mt*}{Main Theorem}
\newcommand\e{{\varepsilon}}
\newcommand\pa[1]{\partial_{#1}}
\begin{document}

\title[]{Helix surfaces in Lorentzian Heisenberg group}

\author{Lorenzo Pellegrino}
\address{Lorenzo Pellegrino: Dipartimento di Matematica e Fisica \lq\lq E. De Giorgi\rq\rq \\
Universit\`a del Salento\\
Prov. Lecce-Arnesano \\
73100 Lecce\\ Italy.}
\email{lorenzo.pellegrino@unisalento.it}

\subjclass[2020]{53C50, 53C25}
\keywords{Lorentzian Lie groups, Lorentzian Heisenberg group, Helix surfaces, constant angle surfaces, Minimal and CMC surfaces}

\begin{abstract}
In this work we investigate constant angle surfaces in the Lorentzian Heisenberg group $\htt$. After providing a complete description of the geometry of the ambient space, we perform the full classification of minimal and CMC helix surfaces in $\htt$, giving their  explicit parametrizations. In addition, we investigate the constant angle spacelike and timelike surfaces for a Lorentzian metric on the Heisenberg group $H_3$ not treated before in the literature, first showing that such surfaces have constant Gaussian curvature and then obtaining  their complete characterization.
\end{abstract}

\maketitle

\section{Introduction}
\setcounter{equation}{0}

The investigation of surfaces in three-dimensional pseudo-Riemannian manifolds is a highly active research area, as their description is crucial in many fields of science. Moreover, the study of submanifolds generally leads to a deeper understanding of the ambient spaces in which they are embedded.

Special classes of surfaces in pseudo-Riemannian three-manifolds may be defined requiring some conditions on either the second fundamental forms of immersions, the shape operator or  the mean curvature. Examples can be found in \cite{CV1, CV4, IV, TA} and references therein. In a series of recent papers (\cite{CP0}--\cite{CP3}), the present author and G. Calvaruso investigated parallel, totally geodesic and totally umbilical surfaces in three-dimensional Lorentzian manifolds admitting a four-dimensional isometry group. These works also provided some explicit examples of minimal and constant mean curvature (CMC) surfaces.

This paper further develops the line of research in this direction including the study of some minimal, constant mean curvature (CMC), and parallel surfaces in a significant Lorentzian Lie group. Particular attention is given to establishing connections and analogies with another important class of surfaces, namely the \textit{helix surfaces} or \textit{constant angle surfaces}. These surfaces are obtained if we require that a unit normal vector field forms a constant angle with a fixed vector field of the ambient space. This geometric requirement has a large interest in many field of application such as Theoretical Physics, Engineering and Computer Science. However, the constancy of some angle on a surface also models interesting situations from daily life, infact a classical example is given by spiral staircases, which correspond to helix surfaces in Euclidean space.

In recent years, helix surfaces have been studied in many Riemannian ambient spaces (see, for example, \cite{CD}--\cite{DMVV},\cite{FMV},\cite{LM},\cite{LM2},\cite{MO}--\cite{Ni} and references therein). Furthermore, the investigation of such surfaces also extended to other settings,  such as higher codimension (\cite{DR},\cite{DR2},\cite{Ru}) and some Lorentzian ambient space. In fact, when we take into account Lorentzian settings, we are allowed to consider more possibilities, as both spacelike and timelike surfaces can be studied. In particular, in the Lorentzian framework, constant angle surfaces have been described in \cite{COPU},\cite{FN},\cite{LM},\cite{LO},\cite{OPP} and \cite{P}.

In particular, in \cite{OP} the authors characterized constant angle surfaces in the {\em Heisenberg group} $H_3$ equipped with a Lorentzian metric. More in general, $H_3$ is the subgroup of $GL(3,\mathbb{R})$ consisting of matrices of the form
$$
\left(
\begin{array}{ccc}
 1 & x & y \\ 0 & 1 & z \\ 0 & 0 & 1 
\end{array}
\right).
$$

The Lorentzian space considered in \cite{OP} is obtained as the restriction $\mathbb{L}^3(0, \tau)$ for $\kappa=0$ of the family $\mathbb{L}^3(\kappa, \tau)$, that is, the original analogue in Lorentzian framework of the so-called Bianchi Cartan Vranceanu spaces (LBCV spaces). In a recent work \cite{CP1}, the present author and G. Calvaruso generalized the geometric construction of these homogeneous Lorentzian three-manifolds considering a new original case with some interesting properties. Thus, another Lorentzian metric appears for the case of Heisenberg group $H_3$, fulfilling the correspondence to the two possibly non-flat Lorentzian metric associated to $H_3$   described in \cite{RR1} and \cite{RR2}.

This paper is devoted to the study of helix surfaces in the Lorentzian Heisenberg group given by $\mathbb{M}^3_1(0,\tau)$ (with $\tau\neq 0$), denoted by $\htt$. In Section 2 we include some useful preliminary result on the geometry of $\htt$ and some basic properties of surfaces in three-dimensional Lorentzian manifolds. Then, in Section 3, we give the Gauss and Codazzi equations for an oriented surface $M$ immersed in $\htt$ to specialize, in Section 4,  to the case of helix surface. In particular, we give the classification of helix surfaces that are minimal and CMC, showing some link with the parallel ones. Finally, in the last section, we treat both timelike and spacelike helix surfaces, characterizing in both cases the non CMC constant angle surfaces and giving an explicit parametrization and some examples.

\section{Preliminaries}
\subsection{Basic geometry of the Lorentzian Heisenberg group}
As we mentioned in the Introduction, furnishing the Heisenberg group of a Lorentzian metric is a well-known problem and it involves also the description of Lorentzian spaces whose isometry group has dimension four. In particular, the two non-flat Lorentzian metrics on $\htt$ are a special case of the more general Lorentzian spaces that takes their name from Bianchi, Cartan and Vranceanu, who studied their Riemannian analogue.
 
LBCV spaces can be described via fibrations over complete, simply connected surfaces of constant curvature $\kappa \in \R$, either Riemannian or Lorentzian, with geodesic fibers. For each of such a surface, there exists a one-parameter family $\mathbb{M}^3_1(\kappa, \tau)$ of fibrations, parametrized by the bundle curvature $\tau \in \R - {0}$. To obtain a Lorentzian metric, the submersion must be Lorentzian when the base is Riemannian, and \textit{viceversa}.

We use standard coordinates $(x, y, z)$ on $\R^3$ and define $\mathbb{M}^3_1(\kappa, \tau) = (\R^3, g_{\kappa,\tau})$, where the metric $g_{\kappa,\tau}$ is given by
\begin{equation}\label{gkt}
g_{\kappa,\tau}=\frac{dx^2 - \delta  dy^2}{\left(1 + \frac{\kappa}{4}(x^2 - \delta  y^2)\right)^2} + \delta \left(dz + \tau \frac{y  dx - x  dy}{1 + \frac{\kappa}{4}(x^2 - \delta  y^2)} \right)^2, \qquad \delta = \pm 1.
\end{equation}
The case $\delta = -1$ corresponds to Lorentzian spaces studied in \cite{Y}. While, the case $\delta = 1$, representing a Riemannian submersion over a Lorentzian surface $\mathbb{M}^2_1(\kappa)$, has been considered for the first time by the present author and Calvaruso in \cite{CP1}.

Observe that, when $\kappa=0$, the space $\mathbb{M}^3_1(0,\tau)$ is the Lorentzian Heisenberg space furnished with two families of non-flat, left-invariant Lorentzian metrics depending on $\delta$, isometric to the non flat-metrics given in \cite{RR1}.

\begin{rem}{\em
Moreover, we point out that:
\begin{itemize}
\item $\mathbb{M}^3_1(0,0) = \R^3_1$ is Minkowski space;
\item $\mathbb{M}^3_1(\kappa, 0)$ is isometric to a Lorentzian product $\mathcal{M}_\delta$, where $\mathcal{M}_{-1} = \mathbb{M}^2(\kappa) \times \R_1$ and $\mathcal{M}_1 = \mathbb{M}^2_1(\kappa) \times \R$;
\item $\mathbb{M}^3_1(-4\tau^2, \tau)$ has constant negative sectional curvature.
\end{itemize}}
\end{rem}

As in the Riemannian setting, for $\tau \neq 0$, the isometry group has two connected components in both cases $\delta = \pm 1$.

From now on we shall denote by $\htt$ ($\tau \neq 0$) the 3-dimensional Heisenberg
group given by $\mathbb{R}^3$ equipped with one of the two non-isometric 1-parameter family of Lorentzian metrics obtained requiring $\kappa=0$ in \eqref{gkt}:

\begin{equation}\label{gonH}
\gt=dx^2-\delta dy^2+\delta(dz+\tau(ydx-xdy))^2.
\end{equation}

Now, let us consider the vector fields:
\begin{align}\label{E123}
E_1=\frac{\partial}{\partial x}+\tau y\frac{\partial}{\partial z}, \qquad E_2=\frac{\partial}{\partial y}-\tau x\frac{\partial}{\partial z}, \qquad
E_3=\frac{\partial}{\partial z},
\end{align}
which form a Lorentzian orthonormal basis on $\htt$, with  $\gt(E_1, E_1)=1$, $\gt(E_3,E_3)=-\gt(E_2,E_2)=\delta$. 

Moreover, for any $\delta$ we have
\begin{equation}\label{Liealg}
[E_1, E_2]=2\tau E_3, \quad [E_2, E_3]=[E_3, E_1]=0.
\end{equation}

At this point, we are able to calculate the associated Levi-Civita connection $\nt$:
\begin{align}\label{LCivitahtt}
\begin{array}{ll}
        \nt_{E_1}{E_1}=\nt_{E_2}E_2=\nt_{E_3}E_3=0, \quad
        &\nt_{E_2}E_1=-\nt_{E_1}E_2=-\tau E_3, \\[3pt]
        \nt_{E_3} E_1=\nt_{E_1}E_3=\tau E_2, \quad
        &\nt_{E_3} E_2=\nt_{E_2} E_3=\delta\tau E_1.
        \end{array}
\end{align}
We may observe that $E_3$ is a $\delta$-unit non degenerate vector field tangent to the fibers of the submersion $\pi:\htt \rightarrow \R^2$  (respectively, $\pi:\htt \rightarrow \R^2_1$) for $\delta=-1$ (respectively, $\delta=1$). By \eqref{LCivitahtt}, we have
\begin{equation}\label{nxe3htt}
    \nt_X E_3=\delta \tau X \wedge E_3 \quad \forall X\in \mathfrak{X}(\htt),
\end{equation}
where $\wedge$ is the cross product in $\htt$ defined by 
$$
E_1\wedge E_2=\delta E_3, \quad E_2\wedge E_3=E_1,  \quad E_1\wedge E_3=\delta E_2, 
$$
Also, using the fixed convention, we get the non zero components of the Riemann curvature tensor, as follows:
\begin{align}\label{Rthtt}
 \Rt(E_1,E_2)E_1=-3\tq E_2, \quad \Rt(E_1,E_3)E_1=\tq E_3,\quad \Rt(E_2,E_3)E_2=-\delta\tq E_3.
\end{align}
Moreover, using $\kappa=0$ in Proposition $2.6$ of \cite{CP1} we find that the tensor $\Rt$ can be described as in the following result.

\begin{prop}\label{prop1htt}
The Riemann curvature tensor $\Rt$ of $\htt$ is determined by
\begin{equation}\label{tensRhtt}
\begin{split}
        \Rt(X,Y)Z=&3\tq[\gt(Y,Z)X-\gt(X,Z)Y]\\
        &-\delta 4\tq[\gt(Y,E_3)\gt(Z,E_3)X- \gt(X,E_3)\gt(Z,E_3)Y\\
        &+\gt(X,E_3)\gt(Y,Z)E_3-\gt(Y,E_3)\gt(X,Z)E_3],
        \end{split}
\end{equation}
for all vector fields $X,Y,Z$ on $\htt$.
\end{prop}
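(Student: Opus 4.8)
The plan is to prove \eqref{tensRhtt} by checking it on the Lorentzian orthonormal frame $\{E_1,E_2,E_3\}$ of \eqref{E123}. Since both sides of \eqref{tensRhtt} are $C^\infty(\htt)$-multilinear in $X$, $Y$, $Z$, it is enough to verify the identity for $X,Y,Z$ ranging over this frame. First I would recall the algebraic symmetries of $\Rt$ — skew-symmetry in the first two slots, skew-symmetry of $\gt(\Rt(X,Y)Z,W)$ in $Z,W$, the pair symmetry, and the first Bianchi identity — and observe that the right-hand side of \eqref{tensRhtt} satisfies the same symmetries, being the standard ``curvature-like'' combination built from $\gt$ and from the symmetric bilinear form $(X,Y)\mapsto\gt(X,E_3)\gt(Y,E_3)$. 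Consequently it suffices to test the three essentially distinct pairs $(X,Y)\in\{(E_1,E_2),(E_1,E_3),(E_2,E_3)\}$ against each $Z\in\{E_1,E_2,E_3\}$.

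For the left-hand side I would use the explicit components \eqref{Rthtt}, namely $\Rt(E_1,E_2)E_1=-3\tau^2E_2$, $\Rt(E_1,E_3)E_1=\tau^2E_3$ and $\Rt(E_2,E_3)E_2=-\delta\tau^2E_3$, and derive the remaining components from the symmetries together with the normalizations $\gt(E_1,E_1)=1$, $\gt(E_2,E_2)=-\delta$, $\gt(E_3,E_3)=\delta$; as an independent check one can recompute each $\Rt(E_i,E_j)E_k$ directly from $\Rt(X,Y)Z=\nt_X\nt_YZ-\nt_Y\nt_XZ-\nt_{[X,Y]}Z$ using \eqref{Liealg} and \eqref{LCivitahtt}. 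For the right-hand side one substitutes the metric values and uses $\gt(E_1,E_3)=\gt(E_2,E_3)=0$: the $E_3$-correction term then contributes only when at least one of $X,Y,Z$ equals $E_3$, and a short computation in each such case (for instance, with $X=Z=E_1$, $Y=E_3$ the right-hand side gives $-3\tau^2E_3-\delta\,4\tau^2(-\delta E_3)=\tau^2E_3$) reproduces exactly \eqref{Rthtt}. Matching all cases proves the formula. Alternatively, and more briefly, the statement follows at once by setting $\kappa=0$ in Proposition~2.6 of \cite{CP1}.

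The only real difficulty is bookkeeping of the sign $\delta=\pm1$, which enters simultaneously through the inner products $\gt(E_2,E_2)=-\delta$ and $\gt(E_3,E_3)=\delta$ and through the overall factor $-\delta\,4\tau^2$ in front of the correction term; one must keep track of where factors of $\delta$ and $\delta^2=1$ cancel, so that, e.g., the $3\tau^2$-part of \eqref{tensRhtt} combines with the $4\tau^2$-correction to yield the single $\tau^2$ in $\Rt(E_1,E_3)E_1$ while leaving $\Rt(E_1,E_2)E_1$ free of any $E_3$-contribution. Beyond this, the verification is entirely routine.
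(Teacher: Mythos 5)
Your proposal is correct, but it takes a different (and more self-contained) route than the paper: the paper obtains Proposition \ref{prop1htt} in one line, by setting $\kappa=0$ in Proposition~2.6 of \cite{CP1}, i.e.\ exactly the ``alternative'' you mention at the end, and gives no independent verification. Your main argument --- tensoriality of both sides, the observation that the right-hand side of \eqref{tensRhtt} is a curvature-like tensor (it is, up to constants, $\gt\owedge\gt$ plus $\gt\owedge h$ with $h(X,Y)=\gt(X,E_3)\gt(Y,E_3)$, so it inherits all algebraic curvature symmetries including first Bianchi), and the case check against \eqref{Rthtt} on the pairs $(E_1,E_2)$, $(E_1,E_3)$, $(E_2,E_3)$ --- is a complete proof that does not rely on the external reference; your sample computation $-3\tq E_3-4\delta\tq(-\delta E_3)=\tq E_3$ for $X=Z=E_1$, $Y=E_3$ is right, and the analogous checks for the other pairs (e.g.\ $3\delta\tq E_3-4\delta\tq E_3=-\delta\tq E_3$ for $X=Z=E_2$, $Y=E_3$) go through. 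One small point to make explicit if you write this up: \eqref{Rthtt} lists only the three components the paper declares nonzero, so to conclude you must also confirm that the remaining independent components (those of type $\Rt(E_i,E_j)E_k$ with $i,j,k$ all distinct, and $\gt(\Rt(E_1,E_2)E_1,E_3)$, etc.) vanish; your suggestion to recompute everything directly from \eqref{LCivitahtt} and \eqref{Liealg} via $\Rt(X,Y)Z=\nt_X\nt_YZ-\nt_Y\nt_XZ-\nt_{[X,Y]}Z$ covers this. What the paper's citation buys is brevity and consistency with the general $\mathbb{M}^3_1(\kappa,\tau)$ family; what your verification buys is a reader-checkable proof within the present paper.
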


\subsection{Surfaces in Three-Dimensional Lorentzian Manifolds}\label{geosurfaces}

Consider an isometric immersion $F: M \to \tilde M$, where $(M, g)$ is a pseudo-Riemannian surface inside a three-dimensional pseudo-Riemannian manifold $(\tilde M, \tilde g)$. The metric $g$ on $M$ is induced by $\tilde g$ through $F$.

Let $N$ be a unit normal vector field along $\Sigma$, satisfying $\tilde g(N, N) = \varepsilon$, where $\varepsilon \in {-1, 1}$. If $\varepsilon = 1$, $N$ is spacelike and $M$ is timelike; if $\varepsilon = -1$, $N$ is timelike and $M$ is spacelike.

Denote by $\nabla$ and $\tilde \nabla$ the Levi-Civita connections of $M$ and $\tilde M$. For any tangent vector fields $X, Y$ on $M$, the \textit{Gauss formula} yields
\begin{align} \label{fG}
\tilde \nabla_X Y = \nabla_X Y + h(X,Y)N,
\end{align}
where $h$ is the \textit{second fundamental form}, a symmetric bilinear form on $TM$.

Moreover, the {\em Weingarten formula} reads:
\begin{align}\label{W}
SX=-\tilde \nabla_X N,
\end{align}
where $S$ denotes the shape operator of $M$, which is related to $h$ in the following way:
$$
\tilde g(SX, Y)=\e h(X,Y)
$$
for all $X$, $Y$ tangent to $M$.

Special properties of the second fundamental form (or, equivalently of the shape operator) characterize special classes of surfaces. If $h=0$, (equivalently, if $S$ vanishes) $M$ is called \textit{totally geodesic}, meaning that its geodesics remain geodesics in $\tilde M$. In addition, a surface is \textit{totally umbilical}, that is,
\begin{align} \label{TU}
h(X,Y) = \lambda g(X,Y)
\end{align}
for some smooth function $\lambda$ locally defined on $M$.
In terms of the the shape operator $S$, \eqref{TU} is equivalent to  the requirement
$$
S = \lambda \, \text{Id}.
$$
Another significant family of surface is that of \textit{parallel surfaces}, defined by the condition
\begin{align*}
\nabla h = 0,
\end{align*}
which implies that the extrinsic invariants of the surface are covariantly constant. Clearly, parallel surfaces are totally geodesic.

In \cite{CV1} the authors characterized parallel surfaces in three-dimensional Lorentzian manifold by proving the following.

\begin{lemma}[\cite{CV1}]\label{lem}
Let $(M, g)$ be a surface in a three-dimensional Lorentzian manifold,
$N$ an $\e$-unit normal to $M$ and $\{E_1, E_2\}$ a pseudo-orthonormal frame field with $g(E_1, E_1) = 1$, $g(E_1, E_2) = 0$ and $g(E_2, E_2)=-\e$, such that the shape operator
associated to $N$ takes the form
$$
S =\begin{pmatrix}
S_{11} & S_{12}\\
-\e S_{12} & S_{22}
\end{pmatrix},
$$
with respect to $\{E_1, E_2\}$. Then $M$ is parallel if and only if

\begin{equation}\label{parallel}
\begin{cases}
X(S_{11}) = -2\e S_{12} g(\nabla_X E_1, E_2),\\
X(S_{12}) = (S_{22}- S_{11}) g(\nabla_X E_1, E_2),\\
X(S_{22}) = 2\e S_{12} g(\nabla_X E_1, E_2),
\end{cases}
\end{equation}
for every tangent vector $X$.
\end{lemma}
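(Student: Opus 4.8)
The statement to prove is Lemma \ref{lem}, the characterization of parallel surfaces via the system \eqref{parallel}. Although it is attributed to \cite{CV1}, I will sketch the direct computation. The plan is to unwind the definition $\nabla h = 0$ in the pseudo-orthonormal frame $\{E_1, E_2\}$ and translate it into conditions on the entries $S_{ij}$. Recall that $h$ is related to $S$ by $\tilde g(SX,Y) = \e\, h(X,Y)$; equivalently, writing $h$ as a symmetric bilinear form, $h(E_1,E_1) = \e S_{11}$, $h(E_1,E_2) = h(E_2,E_1) = S_{12}$ and $h(E_2,E_2) = -\e S_{22}$ — indeed these identities are exactly what forces $S$ to have the displayed matrix form, with the lower-left entry $-\e S_{12}$, once one uses $g(E_1,E_1)=1$, $g(E_2,E_2)=-\e$.

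The key step is to expand $(\nabla h)(X; E_i, E_j) = X(h(E_i,E_j)) - h(\nabla_X E_i, E_j) - h(E_i, \nabla_X E_j)$ and set it to zero for $i,j \in \{1,2\}$ and all tangent $X$. Here one uses that the frame is (pseudo-)orthonormal, so the only nontrivial connection coefficient is $g(\nabla_X E_1, E_2)$: differentiating $g(E_1,E_1)=1$ gives $g(\nabla_X E_1, E_1)=0$, differentiating $g(E_2,E_2)=-\e$ gives $g(\nabla_X E_2, E_2)=0$, and differentiating $g(E_1,E_2)=0$ gives $g(\nabla_X E_1, E_2) = -g(E_1, \nabla_X E_2)$. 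Setting $a := g(\nabla_X E_1, E_2)$, we therefore have $\nabla_X E_1 = a\,\e\, E_2$ (since its $E_2$-component measured with $g(\cdot,E_2)=-\e(\cdot)_2$ must reproduce $a$) and $\nabla_X E_2 = a\, E_1$. Substituting these into the three expansions for $(i,j) = (1,1), (1,2), (2,2)$ and collecting terms yields precisely the three equations in \eqref{parallel}: the $(1,1)$ component gives $X(\e S_{11}) = 2 h(a\e E_2, E_1) = 2 a \e S_{12}$ — wait, one must track the $\e$'s carefully, and the stated sign $X(S_{11}) = -2\e S_{12}\, a$ will emerge from the bookkeeping — the $(2,2)$ component gives the opposite sign, and the cross term $(1,2)$ produces $X(S_{12}) = (S_{22}-S_{11})\,a$.

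The main obstacle is purely a matter of sign discipline: keeping straight the raised-versus-lowered index conventions relating $h$ and $S$ in Lorentzian signature, where $g(E_2,E_2) = -\e$ rather than $+1$, so that the components of a connection-derivative of $E_i$ pick up factors of $\e$, and these must be combined with the $\e$'s already present in $h(E_1,E_1) = \e S_{11}$, etc. I expect no conceptual difficulty beyond this; the converse direction (that the system \eqref{parallel} implies $\nabla h = 0$) is immediate since the same computation is reversible — the three expanded expressions for $(\nabla h)(X;E_i,E_j)$ vanish if and only if the three displayed scalar equations hold for every $X$. Since we only invoke this lemma as a black box from \cite{CV1}, it suffices here to record that it is obtained by this routine expansion of $\nabla h = 0$ in the adapted frame.
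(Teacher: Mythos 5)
The paper itself gives no proof of Lemma \ref{lem} --- it is quoted verbatim from \cite{CV1} and used as a black box --- so your plan of expanding $(\nabla h)(X;E_i,E_j)=X(h(E_i,E_j))-h(\nabla_XE_i,E_j)-h(E_i,\nabla_XE_j)=0$ in the adapted pseudo-orthonormal frame is exactly the right (and essentially the only) route, and when carried out correctly it does reproduce \eqref{parallel}. Your identification of $a:=g(\nabla_XE_1,E_2)$ as the only surviving connection coefficient, and of the lower-left entry $-\e S_{12}$ as forced by self-adjointness of $S$, are both correct.

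However, several of the intermediate formulas you wrote down are wrong by signs or factors of $\e$, and if taken literally they do \emph{not} yield the stated system (the first and third equations would come out with the opposite sign). Concretely: since $g(E_2,E_2)=-\e$, the definition $a=g(\nabla_XE_1,E_2)$ forces $\nabla_XE_1=-\e a\,E_2$ and $\nabla_XE_2=-a\,E_1$; your $\nabla_XE_1=a\e\,E_2$ gives $g(\nabla_XE_1,E_2)=a\e(-\e)=-a$, contradicting the definition of $a$. Likewise, with the column convention $SE_j=\sum_iS_{ij}E_i$ implicit in the displayed matrix (and used elsewhere in the paper), the relation $h(X,Y)=\e\,\tilde g(SX,Y)$ gives $h(E_1,E_1)=\e S_{11}$, $h(E_1,E_2)=\e S_{12}$ and $h(E_2,E_2)=-S_{22}$, not $S_{12}$ and $-\e S_{22}$. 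With these corrections one finds
$(\nabla_Xh)(E_1,E_1)=\e X(S_{11})+2aS_{12}$, $(\nabla_Xh)(E_1,E_2)=\e X(S_{12})+\e a(S_{11}-S_{22})$ and $(\nabla_Xh)(E_2,E_2)=-X(S_{22})+2\e aS_{12}$, whose simultaneous vanishing for all $X$ is precisely \eqref{parallel}, and the equivalence is manifestly reversible. So the gap is not conceptual --- you flagged the sign bookkeeping yourself --- but as written the sketch does not close, and the fix is the pair of corrected formulas for $\nabla_XE_1$, $\nabla_XE_2$ and for the components of $h$ above.
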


Next, using the second fundamental form we can express the {\em mean curvaure}, that is

$$
H = \frac{1}{2} \mathrm{tr}_{g} h = \frac{1}{2} \sum g^{ij}h_{ij}.
$$

If we take into account the shape operator we obtain the following equivalent definition of the mean curvature:

\begin{equation}\label{H}
H = \frac{1}{2} \mathrm{tr} S.
\end{equation}

If $H = 0$, the surface $M$ is called {\em  minimal}. (If $M$ is timelike, it is sometimes referred to as maximal instead.) If $H$ is constant, the surface is said to have constant mean curvature ({\em CMC}, for short). Every totally geodesic surface is minimal, and by definition, every minimal surface is a CMC surface.

A simple calculation using Lemma \ref{lem} and the expression of the mean curvature given in \eqref{H} proves the following.

\begin{prop}\label{pcmc}
Any parallel surface in a three-dimensional Lorentzian manifold has constant mean curvature.
\end{prop}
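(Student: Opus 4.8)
The plan is to deduce the statement directly from Lemma~\ref{lem} together with the expression \eqref{H} of the mean curvature. Around any point of $M$ one can pick a pseudo-orthonormal frame field $\{E_1,E_2\}$ of the kind required in Lemma~\ref{lem}, that is with $g(E_1,E_1)=1$, $g(E_1,E_2)=0$ and $g(E_2,E_2)=-\e$, so that the shape operator $S$ associated to an $\e$-unit normal $N$ has, with respect to $\{E_1,E_2\}$, the matrix
$$
S=\begin{pmatrix} S_{11} & S_{12}\\ -\e S_{12} & S_{22}\end{pmatrix}.
$$
Since the trace of an endomorphism does not depend on the chosen basis, \eqref{H} reads $H=\tfrac12\,\mathrm{tr}\,S=\tfrac12\,(S_{11}+S_{22})$ on this neighborhood.

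Next I would bring in the hypothesis that $M$ is parallel. By Lemma~\ref{lem}, the system \eqref{parallel} holds for every tangent vector field $X$. Adding the first and the third equations of \eqref{parallel} gives, for all $X$,
$$
X(S_{11})+X(S_{22})=-2\e S_{12}\,g(\nabla_X E_1,E_2)+2\e S_{12}\,g(\nabla_X E_1,E_2)=0,
$$
hence $X(\mathrm{tr}\,S)=0$ and therefore $X(H)=0$ for every $X\in\mathfrak{X}(M)$. Thus $dH=0$ on each such neighborhood, and since $M$ is connected, $H$ is constant on $M$, i.e. $M$ is a CMC surface.

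I expect no real obstacle here: the computation reduces to a single line. The only points deserving a word of care are that the frame of Lemma~\ref{lem} is only guaranteed locally, which is harmless because $dH=0$ locally already forces $H$ to be constant on a connected surface, and that the relevant quantity is the (basis-independent) trace of $S$, so only the diagonal entries $S_{11},S_{22}$ enter and the off-diagonal behaviour governed by the second equation of \eqref{parallel} plays no role.
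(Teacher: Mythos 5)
Your proposal is correct and follows exactly the paper's argument: add the first and third equations of \eqref{parallel} to obtain $X(S_{11})+X(S_{22})=0$, then conclude $X(H)=\tfrac12 X(\mathrm{tr}\,S)=0$ for every tangent $X$. The extra remarks on the local nature of the frame and the connectedness of $M$ are a harmless (and welcome) refinement of the same proof.
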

\begin{proof}
For a parallel surfaces equations \eqref{parallel} hold; in particular
$$
\begin{cases}
X(S_{11}) = -2\e S_{12} g(\nabla_X E_1, E_2),\\
X(S_{22}) = 2\e S_{12} g(\nabla_X E_1, E_2).
\end{cases}
$$
Next, using \eqref{H} we shall calculate
$$
X(H)=\frac{1}{2} X(\mathrm{tr} S)=\frac{1}{2} \left( X(S_{11})+X(S_{22})\right)=0
$$
for every tangent vector $X$, so  that $H$ is constant on $M$.
\end{proof}

Observe that, if $M$ is a totally umbilical surface, then $S_{11}=S_{22}=\lambda$ and $S_{12}=0$. Requiring that $M$ is CMC is equivalent to restrict ourselves to a constant $\lambda$.
Thus, a simple substitution in \eqref{parallel} proves the following.

\begin{cor}\label{corTU}
Let $M$ be a totally umbilical surface in a three-dimensional Lorentzian manifold; then $M$ has constant mean curvature if and only if it is parallel.
\end{cor}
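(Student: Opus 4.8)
The plan is to specialize Lemma~\ref{lem} to the totally umbilical case and read off the equivalence directly. Since $M$ is totally umbilical, its shape operator is $S = \lambda\,\mathrm{Id}$, so in the notation of Lemma~\ref{lem} we have $S_{11} = S_{22} = \lambda$ and $S_{12} = 0$. First I would observe that $M$ is CMC precisely when $\lambda$ is constant on $M$: indeed by \eqref{H} we have $H = \tfrac12\,\mathrm{tr}\,S = \lambda$, so $X(H) = X(\lambda)$ for every tangent vector $X$, and $H$ constant is equivalent to $X(\lambda) = 0$ for all $X$.

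Next I would substitute $S_{11} = S_{22} = \lambda$ and $S_{12} = 0$ into the parallel system \eqref{parallel}. The first and third equations become $X(\lambda) = 0$, while the middle equation becomes $X(0) = (\lambda - \lambda)\,g(\nabla_X E_1, E_2) = 0$, which holds automatically. Hence for a totally umbilical surface, the system \eqref{parallel} reduces to the single condition $X(\lambda) = 0$ for every tangent vector $X$, i.e. $\lambda$ is constant. Combining this with the first step, $M$ is parallel if and only if $\lambda$ is constant if and only if $M$ is CMC, which is the claimed equivalence.

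There is essentially no obstacle here — the corollary is a routine consequence of Lemma~\ref{lem} together with the algebraic form of the shape operator in the totally umbilical case; the only mild point of care is to note that the middle equation of \eqref{parallel} is vacuously satisfied when $S_{11} = S_{22}$, so that no further constraint on the frame field $\{E_1, E_2\}$ or on the connection coefficients $g(\nabla_X E_1, E_2)$ is imposed. One should also keep in mind that the frame $\{E_1, E_2\}$ of Lemma~\ref{lem} is pseudo-orthonormal with $g(E_2, E_2) = -\e$, but since $S_{12} = 0$ this signature factor never enters the reduced equations.
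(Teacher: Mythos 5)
Your proof is correct and follows exactly the paper's argument: substitute $S_{11}=S_{22}=\lambda$, $S_{12}=0$ into the system \eqref{parallel}, observe it reduces to $X(\lambda)=0$, and identify $H=\lambda$ so that CMC is equivalent to $\lambda$ constant, hence to parallelism. Nothing further is needed.
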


Not all classes of surfaces are defined starting from either $h$ or $S$. In particular, it is sufficient to establish some function on an oriented surfaces to obtain some interesting geometric properties. For example, a well known class of surfaces is defined by the constancy of the angle the normal forms with a given direction of the ambient space. To be more precise, if we now consider an oriented pseudo-Riemannian surface $M$ immersed into $\tilde{M}$ and a Killing vector field $\tilde{V}$, we can define the \textit{angle function}
\begin{align*}
\nu:=\tilde{g}(N, \tilde{V})\tilde{g}(N,N)=\e\tilde{g}(N, \tilde{V}),
\end{align*}
where $N$ is the $\e$-unit normal to $M$. A surface is said to be a {\em constant angle surface} or a {\em helix surface} if $\nu$ is a real constant.

\section{The structure equations for surfaces in $\htt$}
In this section, let us consider a pseudo-Riemannian oriented surface $M$ immersed into $\htt$ and give the following:
\begin{defn}
The {\em angle function}, or simply {\em angle}, is the function defined by:
\begin{align*}
\nu:=\gt(N,E_3)\e,
\end{align*}
where $N$ is the $\e$-unit normal to $M$, that is, $\gt(N,N)=\e=\pm 1$.
\end{defn}    
Next, we consider the decomposition $E_3=T+\nu N$ to obtain
    \begin{align} \label{ottohtt}
        \gt(T,T)=\delta-\nu^2 \e.
    \end{align}
This leads to:
    \begin{align*}
        \nt _XE_3&=\nt _X T+ X(\nu)N+\nu \nt _X N=\\
        &=\nabla _X T+ \e \gt(SX,T)N+X(\nu)N-\nu SX.
    \end{align*}
Moreover, by \eqref{nxe3htt} we get:
\begin{align*}
 \nt_X E_3=\delta \tau X\wedge E_3=\e \delta\tau  \gt(JX,T)N-\delta \tau \nu JX,
\end{align*}
where we called $JX:=N\wedge X$, the rotation of angle $\pi/2$ in $TM$, which satisfies the relations $\gt(JX,JX)=-\e\gt(X,X)$ and $J^2X=\e X$.

Comparing the two expressions we obtain:
    \begin{align}\label{36htt}
    \begin{cases}
        \nabla_X T=\nu (SX-\delta\tau JX) ,\\[3pt] X(\nu)=-\e \gt(SX-\delta \tau JX, T).
        \end{cases}
    \end{align}
    
Next, we will give the expressions of the Gauss and Codazzi equations for a pseudo-Riemannian surface $M$ immersed into $\htt$:

\begin{prop}\label{gchtt}
Let $X,Y$ denote vector fields tangent to $M$, $K$ the Gaussian curvature of  $M$ and $\bar{K}$ the sectional  curvature  in $\htt$ of the plane tangent to $M$. Then,
\begin{equation}\label{kkhtt}
      K=\bar{K}+\lm \det S=-\tq +\lm\left[\det S+4\delta \nu^2\tq\right]
\end{equation}
and
\begin{equation}\label{Codhtt}
\nabla_X SY-\nabla_YSX-S[X,Y]=-4\delta \e \nu \tq \left[\gt(X,T)Y-\gt(Y,T)X\right].
\end{equation}
\end{prop}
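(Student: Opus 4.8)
The plan is to specialize the general Gauss and Codazzi equations for a hypersurface to the explicit curvature tensor \eqref{tensRhtt} of $\htt$.

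\emph{Step 1: the abstract identities.} I would start from $\Rt(X,Y)Z=\nt_X\nt_Y Z-\nt_Y\nt_X Z-\nt_{[X,Y]}Z$ and substitute the Gauss formula \eqref{fG} and the Weingarten formula \eqref{W}, then split the result into its part tangent to $M$ and its part along $N$. Pairing the tangential part with a tangent field $W$ and using $\gt(SX,Y)=\e h(X,Y)$ gives
\begin{equation*}
\gt(\Rt(X,Y)Z,W)=g(R(X,Y)Z,W)+\e\big[h(X,Z)h(Y,W)-h(Y,Z)h(X,W)\big],
\end{equation*}
while the normal part gives $\gt(\Rt(X,Y)Z,N)=\e\big[(\nabla_X h)(Y,Z)-(\nabla_Y h)(X,Z)\big]$. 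Since $h(\cdot,\cdot)=\e\,\gt(S\cdot,\cdot)$ and $\nabla$ is metric, one has $(\nabla_X h)(Y,Z)=\e\, g\big((\nabla_X S)Y,Z\big)$, so the last identity reads $g\big((\nabla_X S)Y-(\nabla_Y S)X,\,Z\big)=\gt(\Rt(X,Y)Z,N)$, and one notes $(\nabla_X S)Y-(\nabla_Y S)X=\nabla_X SY-\nabla_Y SX-S[X,Y]$.

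\emph{Step 2: the Gauss equation.} Picking a $g$-pseudo-orthonormal frame $\{e_1,e_2\}$ tangent to $M$ with $g(e_1,e_1)=1$ and $g(e_2,e_2)=-\e$ (as in Lemma \ref{lem}), one has $K=\gt(R(e_1,e_2)e_2,e_1)/(-\e)$ and likewise $\bar K=\gt(\Rt(e_1,e_2)e_2,e_1)/(-\e)$. Feeding $Z=e_2$, $W=e_1$ into the tangential identity and noting that for $S$ in the form of Lemma \ref{lem} one has $\det S=-\e(h_{11}h_{22}-h_{12}^2)$, I get $K=\bar K+\e\det S$. It then remains to compute $\bar K$ from \eqref{tensRhtt}: writing $T=t_1e_1+t_2e_2$ and using $\gt(e_i,E_3)=\gt(e_i,T)$, a direct substitution gives $\bar K=3\tq+4\delta\tq(\e t_2^2-t_1^2)$, and the relation $\gt(T,T)=t_1^2-\e t_2^2=\delta-\e\nq$ from \eqref{ottohtt} collapses this to $\bar K=-\tq+4\delta\e\nq\,\tq$; substituting yields \eqref{kkhtt}.

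\emph{Step 3: the Codazzi equation.} For $X,Y,Z$ tangent to $M$ I would compute $\gt(\Rt(X,Y)Z,N)$ straight from \eqref{tensRhtt}: the $3\tq$-term and the first two summands in the $4\delta\tq$-bracket drop out since $X,Y,Z$ are $\gt$-orthogonal to $N$, and with $\gt(E_3,N)=\e\nu$, $\gt(X,E_3)=\gt(X,T)$, $\gt(Y,E_3)=\gt(Y,T)$ the rest is
\begin{equation*}
\gt(\Rt(X,Y)Z,N)=-4\delta\e\nu\tq\big[\gt(X,T)\gt(Y,Z)-\gt(Y,T)\gt(X,Z)\big].
\end{equation*}
Equating this with $g\big((\nabla_X S)Y-(\nabla_Y S)X,Z\big)$ from Step 1 for every tangent $Z$ and invoking non-degeneracy of $g$ on $M$ gives \eqref{Codhtt}. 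The hard part is purely bookkeeping: tracking the signs produced by $\e$ and by the signature $g(e_2,e_2)=-\e$ of the induced metric, both in the expression for $\det S$ and in the reduction of $\bar K$, where the numerical cancellation $3\tq-4\delta^2\tq=-\tq$ is what produces the ambient sectional curvature; everything else is a routine substitution into \eqref{tensRhtt}.
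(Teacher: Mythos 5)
Your proposal is correct and follows essentially the same route as the paper: specialize the standard Gauss and Codazzi structure equations for a hypersurface to the explicit curvature tensor \eqref{tensRhtt}, using $\gt(E_3,N)=\e\nu$, $\gt(\cdot,E_3)=\gt(\cdot,T)$ on tangent vectors, and $\gt(T,T)=\delta-\e\nq$ to reduce $\bar K$ to $-\tq+4\delta\e\nq\tq$. The only cosmetic differences are that you derive the structure equations from scratch (the paper cites them) and you evaluate $\gt(\Rt(X,Y)Z,N)$ rather than $\Rt(X,Y)N$, which are equivalent by the curvature symmetries; all the sign bookkeeping in your sketch checks out.
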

\begin{proof}
We start recalling the the Gauss equation for pseudo-Riemannian surfaces immersed into $\htt$:
\begin{align}\label{geqprs}
    K=\bar{K}+\lm \frac{\gt(SX,X)\gt(SY,Y)-\gt(SX,Y)^2}{\gt(X,X)\gt(Y,Y)-\gt(X,Y)^2}
\end{align}
where, if $X$ and $Y$ are orthonormal, we have that:
\begin{align*}
\frac{\gt(SX,X)\gt(SY,Y)-\gt(SX,Y)^2}{\gt(X,X)\gt(Y,Y)-\gt(X,Y)^2}=\det S.
\end{align*}
Now, considering a local (pseudo-)orthonormal basis $\{ X,Y\}$ on $M$, i.e., $\gt(X,X)=1$, $\gt(X,Y)=0$ and $\gt(Y,Y)=-\lm$, we have
\begin{align*}
\bar{K}(X,Y)=-\e \gt(\Rt(X,Y)Y,X)=\tq(-1+4\e\delta \nu^2),
\end{align*}
whence equation \eqref{kkhtt} follows.

We now consider the Codazzi equation for hypersurfaces, that is
\begin{align}\label{ceqhs}
     \gt(\Rt(X,Y)Z,N)=\gt(\nabla_XA(Y)-\nabla_YA(X)-A[X,Y],Z).
\end{align}

Therefore, since
\begin{align*}
\Rt(X,Y)N=4\delta\e\tq\nu[\gt(X,T)Y-\gt(Y,T)X],
\end{align*}
we get equation \eqref{Codhtt} by the arbitrarity of $Z$.
\end{proof}

\section{Minimal helix surfaces in $\htt$}
Let us assume that the surface $M$ forms a constant angle with the vector field $E_3$, that is, $\nu$ is a real constant and so $M$ is a helix surface. We prove the following.

\begin{prop}\label{prop4.1}
Let $M$ denote a helix surface in $\htt$ and $N$ the $\e$-unit vector field normal to $M$ such that $\gt(N,N)=\e$. Then, if we consider the tangent basis  $\{T, JT\}$ where $T$ is the tangent projection of $E_3$ on $M$ and $JT=N\wedge T$, the following hold:
    \begin{itemize}
        \item[(i)] the shape operator of $M$ is given by
        \begin{align*}
           S= \begin{pmatrix}
                    0 &\delta\e \tau \\
                    -\delta\tau &\mu
            \end{pmatrix}
        \end{align*}
for some smooth function $\mu$ on $M$;
    \item[(ii)] the Levi-Civita connection $\nabla$ of $M$ is described by
    \begin{align*}
            \nabla_T T= -2\delta \tau \nu\, JT, \qquad \nabla_{JT}T=\mu  \nu\,  JT, \\
            \nabla_T JT= -2\e\delta \tau \nu\,  T, \qquad \nabla_{JT}JT=\e\mu  \nu\,  T;
     \end{align*}
     \item[(iii)] the Gaussian curvature of $M$ is constant and is given by
	   \begin{equation}
        K=4\delta \e \tq \nu^2 ;
    \end{equation}
      \item[(iv)]
      the function $\mu$ satisfies the equation
    \begin{equation}\label{star}
        T(\mu)+\mu^2 \nu -4\delta \tq \nu^3 =0.
    \end{equation}
    \end{itemize}

\end{prop}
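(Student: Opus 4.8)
The plan is to exploit the two structural equations \eqref{36htt}, specialized to the constant-angle case $X(\nu)=0$, and to read off everything from the frame $\{T, JT\}$. First I would record the basic algebra of this frame: from \eqref{ottohtt} we have $\gt(T,T)=\delta-\e\nu^2$, and since $JT=N\wedge T$ the identities $\gt(JT,JT)=-\e\gt(T,T)$ and $J^2=\e\,\mathrm{Id}$ hold. Note this frame is \emph{not} orthonormal in general; to apply the earlier orthonormal-frame formulae I would rescale to $E_1:=T/\sqrt{|\delta-\e\nu^2|}$, etc., but it is cleaner to work directly and just keep track of the norm factor $\delta-\e\nu^2$ throughout.

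For part (i), the key input is the second equation of \eqref{36htt}: since $\nu$ is constant, $\gt(SX-\delta\tau JX,T)=0$ for all tangent $X$. Taking $X=T$ gives $\gt(ST,T)=\delta\tau\,\gt(JT,T)=0$, so the $(1,1)$-entry of $S$ in the basis $\{T,JT\}$ vanishes. Taking $X=JT$ gives $\gt(S(JT),T)=\delta\tau\,\gt(J^2T,T)=\delta\tau\e\,\gt(T,T)=\e\delta\tau(\delta-\e\nu^2)$. Writing $S$ as a $2\times 2$ matrix against $\{T,JT\}$ and using the general symmetry relation $\gt(SX,Y)=\e h(X,Y)$ — equivalently the shape operator has the form from Lemma \ref{lem} relative to a suitably normalized frame — one deduces the off-diagonal entries are $\delta\e\tau$ and $-\delta\tau$ as claimed, with the remaining entry an unknown smooth function $\mu$. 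I would be careful here: the matrix in the statement is expressed with respect to $\{T,JT\}$ directly, and the precise placement of the $\e$ and $\delta$ factors comes from combining $\gt(S(JT),T)$ with the self-adjointness of $S$ w.r.t.\ $\gt$ and the sign conventions $\gt(T,T)=\delta-\e\nu^2$, so this bookkeeping is the first thing to get exactly right.

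For part (ii), I would feed the shape operator from (i) into the first equation of \eqref{36htt}, $\nabla_X T=\nu(SX-\delta\tau JX)$, evaluated at $X=T$ and $X=JT$. With $ST=-\delta\tau\,JT$ (reading the first column of $S$) and $JT=JT$, we get $\nabla_T T=\nu(-\delta\tau JT-\delta\tau JT)=-2\delta\tau\nu\,JT$; with $S(JT)=\delta\e\tau\,T+\mu\,JT$ and $J(JT)=\e T$ we get $\nabla_{JT}T=\nu(\delta\e\tau T+\mu JT-\delta\e\tau T)=\mu\nu\,JT$. The formulae for $\nabla_T JT$ and $\nabla_{JT}JT$ then follow by differentiating the relations $\gt(T,JT)=0$ and $\gt(JT,JT)=-\e(\delta-\e\nu^2)$ (both constant), or equivalently from $\nabla_X(JT)=J\nabla_X T$ up to the normal component — I would use the metric-compatibility route to avoid any ambiguity about how $J$ interacts with $\nabla$ on a non-orthonormal frame, which yields the extra $\e$ and the same coefficients.

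For part (iii), the Gaussian curvature is already pinned down by the general Gauss equation \eqref{kkhtt}: $K=-\tq+\e[\det S+4\delta\nu^2\tq]$, and from (i) $\det S=0\cdot\mu-(\delta\e\tau)(-\delta\tau)=\e\tq$, so $K=-\tq+\e[\e\tq+4\delta\nu^2\tq]=-\tq+\tq+4\delta\e\nu^2\tq=4\delta\e\tq\nu^2$, manifestly constant. For part (iv), I would compute $X(\nu)$ — which must vanish — no, rather I would use the Codazzi equation \eqref{Codhtt} with $X=T$, $Y=JT$: the left side expands using (ii) as $\nabla_T(S\,JT)-\nabla_{JT}(ST)-S[T,JT]$, where $[T,JT]=\nabla_T JT-\nabla_{JT}T$ is read off from (ii), and the right side is $-4\delta\e\nu\tq[\gt(T,T)JT-\gt(JT,T)T]=-4\delta\e\nu\tq(\delta-\e\nu^2)JT$. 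Collecting the $JT$-component (the $T$-component should cancel automatically, which is a useful consistency check) produces a scalar identity involving $T(\mu)$, $\mu^2\nu$, and $\tq\nu$, $\tq\nu^3$; simplifying, using $K=4\delta\e\tq\nu^2$ to eliminate one combination, gives $T(\mu)+\mu^2\nu-4\delta\tq\nu^3=0$. The main obstacle I anticipate is \textbf{not} conceptual but the sign/normalization bookkeeping in parts (i)–(ii): because $\{T,JT\}$ is pseudo-orthogonal with norms depending on $\nu$, one must consistently distinguish $\gt(SX,Y)$ from $\e h(X,Y)$ and track the factor $\delta-\e\nu^2$, and a single misplaced $\e$ or $\delta$ propagates into (iii) and (iv); I would therefore verify (iv) independently by also differentiating \eqref{star}'s ingredients via the second equation in \eqref{36htt} applied to $X=T$ acting on the function $\nu$ (trivially zero) combined with $T(\gt(T,T))=0$, as a cross-check that the Codazzi computation was done correctly.
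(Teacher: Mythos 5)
Your proposal is correct and follows essentially the same route as the paper: parts (i) and (ii) are obtained by evaluating the two structure equations \eqref{36htt} on $\{T,JT\}$ (with the constancy of $\nu$ killing the normal derivative), part (iii) is the Gauss equation \eqref{kkhtt} with $\det S=\e\tau^2$, and part (iv) is the Codazzi equation \eqref{Codhtt} applied to the pair $(T,JT)$. Your sign bookkeeping (including $\gt(T,T)=\delta-\e\nu^2$, $J^2=\e\,\mathrm{Id}$, and the self-adjointness argument fixing the off-diagonal entries) reproduces the paper's computations exactly, so no further comment is needed.
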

\begin{proof}
Considering the tangent basis $\{T, JT \}$ and using opportunily \eqref{36htt}, we get \textit{(i)} and \textit{(ii)}.

We now proceed calculating the  Gaussian curvature. By \eqref{kkhtt} we find
\begin{align*}
    K=-\tq +\e\left[\det S+4\delta \nu^2\tq\right]=4\delta \e \tq \nu^2
\end{align*}

Finally, we calculate
\begin{align}\label{A}
\begin{array}{ll}
 &\nabla_TS(JT)-\nabla_{JT}S(T)-S[T,JT]=\\
         &=\nabla_T(\delta \e\tau T+\mu JT)-\nabla_{JT}(-\delta\tau JT)- S\left(-2\e\delta\nu\tau T-\mu\nu JT\right)=\\
         &= \left( -4\e \tq \nu + T(\mu) + \mu^2 \nu \right)JT.
\end{array}
\end{align}
By Proposition \ref{gchtt}, we then get
\begin{align}\label{B}
\begin{array}{ll}
    \nabla_TS(JT)-\nabla_{JT}S(T)-S[T,JT]&=-4\delta \e \nu \tq [\gt(T,T)JT-\gt(JT,T)T]=\\
    &=\left(-4\e \nu \tq +4 \delta\tq \nu^3 \right) JT
 \end{array}
\end{align}
and so, by comparing \eqref{A} and \eqref{B}, we get \eqref{star}.
\end{proof}

\begin{rem}
We already know from \cite{CCLP} that the Lorentzian Heisenberg group does not admit any totally umbilical (in particular, totally geodesic) surface. For the case of helix surfaces, this is clear observing that $\tau\neq 0$ in the matricial form of the shape operator given in Proposition {\em \ref{prop4.1}}.
\end{rem}

Since we are interested in describing some minimal and CMC surfaces in $\htt$, we shall prove the following.

\begin{cor}\label{cor}
Let $M$ be a helix surfaces in $\htt$. Following the notation introduced in Proposition {\em\ref{prop4.1}},  $M$ has constant mean curvature $H$ if and only if $\mu=2H$.

In particular, the helix surface $M$ is minimal if and only if $\mu=0$.
\end{cor}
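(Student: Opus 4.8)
The plan is to read off the mean curvature directly from the matricial form of the shape operator obtained in Proposition \ref{prop4.1}\textit{(i)}. Recall that the mean curvature is defined, via \eqref{H}, by $H=\tfrac12\operatorname{tr}S$. Since in the basis $\{T,JT\}$ we have
\[
S=\begin{pmatrix} 0 & \delta\e\tau\\ -\delta\tau & \mu\end{pmatrix},
\]
its trace is simply $\mu$, so $H=\tfrac12\mu$, i.e. $\mu=2H$. Thus $\mu$ is constant if and only if $H$ is constant, which is precisely the CMC condition; and in that case the constant value of $H$ is $\mu/2$. This gives the first assertion in one line.

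For the second assertion I would just specialize: $M$ is minimal means $H=0$, which by the identity $\mu=2H$ is equivalent to $\mu=0$. Conversely $\mu=0$ forces $H=0$. One small point worth noting for rigor: a priori $\mu$ is only a smooth function on $M$, so "CMC" should be interpreted as "$H$ is (globally) constant on $M$"; the equivalence $\mu=2H$ holds pointwise, hence one side is constant exactly when the other is. No use of the structure equation \eqref{star} or of parts \textit{(ii)}--\textit{(iv)} is needed for the statement itself, though \eqref{star} becomes the relevant ODE once one wants to actually integrate and classify the CMC helix surfaces in the sequel.

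There is essentially no obstacle here: the result is an immediate corollary of the explicit form of $S$, and the only thing to be careful about is the bookkeeping of the factor $\tfrac12$ and the interpretation of constancy. Below is the proof.

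\begin{proof}
By Proposition \ref{prop4.1}\textit{(i)}, with respect to the tangent basis $\{T,JT\}$ the shape operator of $M$ is
\[
S=\begin{pmatrix} 0 & \delta\e\tau\\ -\delta\tau & \mu\end{pmatrix},
\]
for a smooth function $\mu$ on $M$. Hence $\operatorname{tr}S=\mu$, and by \eqref{H} the mean curvature of $M$ is
\[
H=\frac12\operatorname{tr}S=\frac{\mu}{2}.
\]
Since this identity holds pointwise on $M$, the function $H$ is constant on $M$ if and only if $\mu$ is constant on $M$; and in that case $\mu=2H$. Therefore $M$ has constant mean curvature $H$ if and only if $\mu=2H$.

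In particular, taking $H=0$, the surface $M$ is minimal if and only if $\mu=0$.
\end{proof}
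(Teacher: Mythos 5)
Your proof is correct and follows essentially the same route as the paper: read off $\operatorname{tr}S=\mu$ from the matrix in Proposition \ref{prop4.1}\textit{(i)} and apply \eqref{H} to get $H=\mu/2$. The only addition is your (correct) remark that the trace is basis-independent so the pointwise identity $\mu=2H$ immediately converts constancy of one into constancy of the other.
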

\begin{proof}
The statement follows from the calculation of the mean curvature of $M$ using \eqref{H} and the expression of $S$ given in Proposition \ref{prop4.1}.
\end{proof}

\begin{rem}\label{nu0}
Observe that using $\mu=0$ in \eqref{star} we deduce $\nu=0$; however, in general the converse is not true. In fact, there exist some surfaces with constant angle $\nu=0$ that are not minimal.

Moreover, if we require that $\mu\neq 0$ is constant (i.e. $M$ is proper CMC) and we put $\mu=2H$ in \eqref{star}, we deduce that
\begin{itemize}
\item either $\nu=0$, or
\item  $\mu^2=4\delta\tq \nu^2$.
\end{itemize}
In particular, the second case occurs only if $\delta=1$, and in this case $\mu=2\tau \nu$, that is, $M$ is a CMC, helix surface with $H=\tau \nu$ prescribed by the geometry of the space.
\end{rem}

We point out that for a general constant angle surface $M$ with $\nu=0$, the vector field $E_3$ is fully tangent to $M$, which means that the $\e$-unit normal vector field can be given by $N= aE_1+bE_2$ for some smooth function $a$ and $b$ defined on  some open neighbourhood $U$ of $M$ such that $a^2-\delta b^2=\e$. In particular, without any other assumption, it is not immediate to obtain a significant and explicit parametrization for such a surface. In any case, using $\nu=0$ in (ii) of Proposition \ref{prop4.1} we deduce that $M$ is flat; moreover, more geometrically we observe that $M$ is some tubular surface around the $E_3$-direction. 

\medskip
 We shall now  describe helix surfaces with $\nu=0$ obtaining explicit parametrization for the minimal and CMC cases. As $\e$ and $\delta$ are not fixed, in order to simplify the calculation, we shall treate separately two distinct cases, depending on the sign of $\delta$.

\medskip
\textbf{Case (I): $\boldsymbol{\delta =-1}$.} In this case, as $\e =\gt(N,N)=a^2+ b^2=1$ the surface $M$ is timelike and there exists a smooth function $\varphi: U \rightarrow \R$ such that $a=\cos \varphi$ and $b=\sin \varphi$.

Next, the vector fields
\begin{align}
\label{frameI}
T=E_3, \qquad JT=N\wedge T= \sin \varphi E_1-\cos \varphi E_2, \qquad 
\end{align}
span the tangent space to $M$ at every point. Moreover, using $\nu=0$ in Proposition \ref{prop4.1} we obtain that
$$
\nabla_T T=\nabla_{JT} T=\nabla_T JT=\nabla_{JT} JT=0
$$
thus $M$ is flat and vector fields
\begin{equation}\label{cooI}
T  = \pa {u}, \quad JT  = \pa {v}
\end{equation}
may be taken as coordinate vector fields on $M$, as they commute.

Moreover, using \eqref{LCivitahtt} and \eqref{frameI} it follows that
$$
\begin{array}{l}
\tau =\dfrac{\gt(AT, JT)}{\gt(JT,JT)}=-\gt\left(\nt_{T} N, JT\right)=\varphi (u,v)_u+\tau,
\\[7pt]
\mu=\dfrac{\gt(AJT, JT)}{\gt(JT,JT)}=-\gt\left(\nt_{JT} N, JT\right)=\varphi(u,v)_v ,
\end{array}
$$
that leads to $\mu=\mu(v)$, as $\varphi(u,v)=\varphi(v)$ such that $\varphi'(v)=\mu(v)$.

Next, from Corollary \ref{cor} we deduce that $M$ has constant mean curvature $H$ if and only if 
$$
\varphi(v)=2H v+\varphi_0
$$
for some real constant $\varphi_0$. In particular, $M$ is minimal if and only if $H=0$, that is, $\varphi=\varphi_0$.

\medskip
When $\varphi=\varphi_0$, denote by $F: M \rightarrow \htt, \; (u,v)\mapsto(x(u,v), y(u,v), z(u,v))$ the immersion of the minimal surface $M$ in the local coordinates introduced in \eqref{cooI}. By \eqref{frameI} and \eqref{E123} we obtain
\begin{equation} \label{eqpaI}
\begin{array}{ll}
(\pa {u} x, \pa {u} y, \pa {u} z)=& (0,0,1),\\[6pt]
(\pa {v} x, \pa {v} y, \pa {v} z)=& \left(\sin \varphi_0
,-\cos \varphi_0
,-\tau \left(\cos \varphi_0
 \, x(u,v)+\sin \varphi_0
\, y(u,v)\right)\right).
\end{array}
\end{equation}
Then, by integration of \eqref{eqpaI}, we find
$$
F(u,v)=\left(\sin \varphi_0 v+c_1
,-\cos \varphi_0v +c_2 
,u +c_3\right)
$$
for some real constants $c_1$, $c_2$ and $c_3$. Thus, since translations are isometries of the ambient space we obtain the immersion
$$
F(u,v)=\left(\sin \varphi_0\, v,-\cos \varphi_0\,v, u\right).
$$

Observe that in the global coordinates $(x,y,z)$ of $H_3$, the  flat, minimal helix surface $M$, as described above, is characterized by the Cartesian equation 
$$
\sin \varphi_0\, y + \cos \varphi_0\, x=0;
$$
so, $M$ may be interpreted as (a part of) a \lq\lq plane\rq\rq \ in $H_3$, that is clearly a special case of a tube.

\medskip
When $H\neq0$, denote again by $F: M \rightarrow \htt, \; (u,v)\mapsto(x(u,v), y(u,v), z(u,v))$ the immersion of the proper CMC surface $M$ in the local coordinates introduced above. Using  \eqref{frameI},  and \eqref{cooI}, we obtain
$$
\begin{array}{ll}
(\pa {u} x, \pa {u} y, \pa {u} z)=& (0,0,1),\\[6pt]
(\pa {v} x, \pa {v} y, \pa {v} z)=& \left(\sin \varphi(v)
,-\cos \varphi(v)
,-\tau \left(\cos \varphi(v)\,
 x(u,v)+\sin \varphi(v)\,
y(u,v)\right)\right).
\end{array}
$$
After the reparametrization using the map $\varphi(v)=2H v+\varphi_0\mapsto v$, by integration, we get
$$
F(u,v)=\left(-\cos v, -\sin v,u -\tau v +c_3\right)
$$
for some real constant $c_3$. Therefore, up to isometries of the ambient space, the immersion is given by
$$
F(u,v)=\left(-\cos v, -\sin v,u -\tau v\right).
$$

Let us observe that in the global coordinates $(x,y,z)$ of $H_3$, the  flat, CMC, helix surface $M$ given above may be interpreted as (a part of) a \lq\lq cylinder\rq\rq \ as it is characterized by the Cartesian equation 
$$
x^2+y^2=1.
$$

\medskip
\textbf{Case (II): $\boldsymbol{\delta=1}$.}  As $N=aE_1+bE_2$, so $a^2-b^2=\e=\pm1$, we shall consider separately two cases, depending on the sign of $\e$. In fact, if $\varepsilon=1$ (respectively, $\varepsilon=-1$), then $M$ is timelike (resp. spacelike) and there exists a smooth function $\varphi:U \rightarrow \R$ such that $a=\cosh \varphi$ (resp. $\sinh \varphi$) and $b=\sinh \varphi$ (resp. $\cosh \varphi$).

In both cases, the vector fields
\begin{align}
\label{frameII}
T=E_3, \qquad JT=N\wedge T= b(\varphi) E_1+a(\varphi) E_2
\end{align}
span the tangent space to $M$ at every point and again, using $\nu=0$ in Proposition \ref{prop4.1} we deduce
$$
\nabla_T T=\nabla_{JT} T=\nabla_T JT=\nabla_{JT} JT=0.
$$
Hence, $M$ is flat and vector fields
\begin{align}
\label{cooII}
\pa u=T, \qquad \pa v=JT
\end{align}
can be taken as coordinate vector fields on $M$, so that the functions $\mu=\mu(u,v)$ and $\varphi=\varphi(u,v)$. Next, in analogy with the previous case, after computing the shape operator of the surface $M$ using the definition \ref{W} and comparing with the one given in Proposition \ref{prop4.1}, we deduce that $\mu(v)=-\varphi'(v)$. Moreover, from Corollary \ref{cor}, we get that $M$ has constant mean curvature $H$ if and only if 
$$
\varphi(v)=-2H v+\varphi_0
$$
for some real constant $\varphi_0$. In particular, $M$ is minimal if and only if $H=0$, that is $\varphi=\varphi_0$.

Next, denote by $F: M \rightarrow \htt, \; (u,v)\mapsto(x(u,v), y(u,v), z(u,v))$ the immersion of the constant angle surface $M$ in the local coordinates introduced in \eqref{cooII}. By \eqref{frameII} and \eqref{E123} we obtain
\begin{equation} \label{eqpaII}
\begin{array}{ll}
(\pa {u} x, \pa {u} y, \pa {u} z)=& (0,0,1),\\[6pt]
(\pa {v} x, \pa {v} y, \pa {v} z)=& \left(b(\varphi(v))
,a (\varphi(v))
,\tau \left(-a(\varphi(v))
 x(u,v)+b(\varphi(v))
y(u,v)\right)\right).
\end{array}
\end{equation}

\medskip 
Then, when $M$ is minimal (i.e. $H=0$) we obtain, by integration of \eqref{eqpaII}, the following parametrizations
\begin{equation}\label{min2}
F(u,v)=\begin{cases}
\left(\sinh \varphi_0 \, v,\cosh \varphi_0 \, v,u \right) \quad \mathrm{if}\,\, \e=1,\\[6pt]
\left(\cosh \varphi_0 \, v,\sinh \varphi_0 \, v,u \right) \quad \mathrm{if}\,\, \e=-1
\end{cases}
\end{equation}
where we used isometries of the ambient space.

\medskip
When $H\neq0$, we can use the reparametrization $\varphi(v)=-2H v+\varphi_0\mapsto v$; moreover, by integration of \eqref{eqpaII} and applying isometries of the ambient space, we get
\begin{equation}\label{cmc2}
F(u,v)=\begin{cases}
\left(\cosh v,\sinh v,u -\tau v\right) \quad \mathrm{if}\,\, \e=1,\\[6pt]
\left(\sinh  v,\cosh v, u+\tau v \right) \quad \mathrm{if}\,\, \e=-1.
\end{cases}
\end{equation}

The above calculations are summarized in the following main result of this section.

Again, as in Case (I), any of the minimal (respectively CMC) surfaces parametrized by \eqref{min2} (respectively, \eqref{cmc2}) may be interpreted as (a part of) a either Lorentzian  or Riemannian \lq\lq plane \rq\rq (respectively, \lq\lq hyperboloid\rq\rq ).

\begin{theorem}\label{main}
Let $M$ be a constant angle surface immersed into the Lorentzian Heisenberg group $\htt$. If $M$ is minimal, then it is some either Riemannian or Lorentzian flat \lq\lq plane\rq\rq  \ corresponding to one of the following cases:
\begin{itemize}
\item[(I)] if $\delta=-1$, $M$ is timelike and it is parametrized by
$$
F(u,v)=\left(\sin \varphi_0	\, v,-\cos \varphi_0\, v, u\right);
$$
\item[(II)] if $\delta=1$, $M$ is either timelike or spacelike and it is parametrized by
$$
F(u,v)=\begin{cases}
\left(\sinh \varphi_0	\, v,\cosh \varphi_0\, v,u \right) \quad \mathrm{(timelike)}, \\[6pt]
\left(\cosh \varphi_0\,  v,\sinh \varphi_0\, v,u \right) \quad \mathrm{(spacelike)};
\end{cases}
$$
\end{itemize}
where $\varphi_0$ is a real constant.

Moreover, the Lorentzian Heisenberg group $\htt$ admits as proper {\em CMC} surfaces any open subset of some (either Riemannian or Lorentzian) flat \rq\rq cylinder\lq\lq  \ $M$ corresponding to one of the following cases:
\begin{itemize}
\item[(I)] if $\delta=-1$, $M$ is timelike and it is parametrized by
$$
F(u,v)=\left(-\cos v, -\sin v,u -\tau v\right);
$$
\item[(II)] if $\delta=1$, $M$ is either timelike or spacelike and it is parametrized by
$$
F(u,v)=\begin{cases}
\left(\cosh v,\sinh v,u -\tau v\right) \quad \mathrm{(timelike)},\\[6pt]
\left(\sinh  v,\cosh v, u+\tau v \right) \quad \mathrm{(spacelike)}.
\end{cases}
$$
\end{itemize}
\end{theorem}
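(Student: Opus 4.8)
The plan is to specialize the general helix-surface machinery of Proposition \ref{prop4.1} and Corollary \ref{cor} to the case $\nu=0$, which is where the classification actually concentrates: the analysis preceding the theorem statement already shows that for $\nu=0$ the surface is flat, the frame $\{T,JT\}$ with $T=E_3$ can be taken as a coordinate frame $\{\partial_u,\partial_v\}$, and the normal is $N=aE_1+bE_2$ with $a^2-\delta b^2=\e$. So the first step is to parametrize $N$ by a single function $\varphi$ using the constraint $a^2-\delta b^2=\e$: trigonometric when $\delta=-1$ (forcing $\e=1$), hyperbolic when $\delta=1$ (with two sub-branches according to $\e=\pm1$). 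This reduces everything to controlling the single scalar function $\varphi(u,v)$.

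Next I would compute the shape operator $S$ directly from the Weingarten formula $SX=-\nt_X N$ using the structure constants \eqref{LCivitahtt}, expanding $N$ in the $E_i$ frame and $T,JT$ in terms of $E_i$ via \eqref{frameI}, \eqref{frameII}. Comparing the resulting matrix entries with the normal form of $S$ from Proposition \ref{prop4.1}(i) pins down two facts: that $\varphi$ depends on $v$ only (the "$\tau$" entry forces $\varphi_u=0$, up to the ambient $\tau$ constant already present), and that $\mu=\pm\varphi'(v)$ (sign depending on $\delta$). Then Corollary \ref{cor} converts the CMC condition $\mu=2H$ into the linear ODE $\varphi'(v)=\mp 2H$, so $\varphi(v)=\mp 2Hv+\varphi_0$; in particular $\varphi\equiv\varphi_0$ in the minimal case $H=0$.

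The final step is to integrate the immersion. Writing $F=(x,y,z)$ and using \eqref{E123} together with $\partial_u=T=E_3$ and $\partial_v=JT$, one gets the first-order system \eqref{eqpaI} (resp. \eqref{eqpaII}): $(x_u,y_u,z_u)=(0,0,1)$ and $(x_v,y_v,z_v)$ expressed through $\sin\varphi,\cos\varphi$ (resp. $\sinh\varphi,\cosh\varphi$) and the current values of $x,y$. In the minimal case $\varphi=\varphi_0$ is constant, so $x,y$ are linear in $v$ and the $z$-equation integrates immediately; in the proper CMC case, after the reparametrization $v\mapsto \varphi(v)$ the $x,y$ equations become $x_v=\pm\sin v$ etc., giving circular (resp. hyperbolic) arcs, and $z$ picks up the extra linear term $-\tau v$ (resp. $+\tau v$). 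Discarding additive constants (translations are isometries of $\htt$) yields exactly the parametrizations in the statement, and one reads off the Cartesian equations — a "plane" $\sin\varphi_0\,y+\cos\varphi_0\,x=0$ in the minimal case, a "cylinder/hyperboloid" $x^2+y^2=1$ (resp. $x^2-y^2=\pm1$) in the CMC case.

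The main obstacle is not any single hard estimate but the bookkeeping across the three $(\delta,\e)$-branches: one must consistently track signs coming from $\gt(E_2,E_2)=-\delta$, from $J^2=\e\,\mathrm{Id}$ and $\gt(JX,JX)=-\e\gt(X,X)$, and from the cross product convention, and verify that the computed $S$ genuinely matches the normal form of Proposition \ref{prop4.1}(i) in each case (including that the off-diagonal entries are $\delta\e\tau$ and $-\delta\tau$, not something else). A secondary subtlety is justifying that Remark \ref{nu0} does not produce extra proper-CMC helix surfaces here: the alternative $\mu^2=4\delta\tq\nu^2$ there requires $\nu\neq0$ and is excluded once we are in the $\nu=0$ regime, so the only proper CMC helix surfaces are the cylinders/hyperboloids above — this should be noted explicitly so the classification is seen to be exhaustive.
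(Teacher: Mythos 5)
Your proposal follows essentially the same route as the paper: for the minimal case $\nu=0$ is forced by equation \eqref{star}, the normal is parametrized by a single function $\varphi$ (circular for $\delta=-1$, hyperbolic for $\delta=1$ with the two causal sub-branches), the Weingarten computation yields $\varphi_u=0$ and $\mu=\pm\varphi'(v)$ so that the CMC condition linearizes $\varphi$, and the immersion is then integrated branch by branch, exactly as in the text preceding the theorem. The only caveat concerns your closing claim that these cylinders are the \emph{only} proper CMC helix surfaces: the branch $\mu^2=4\delta\tau^2\nu^2$ with $\nu\neq 0$ is not ruled out merely by working in the $\nu=0$ regime but requires the later analysis of Section 5 (Remarks \ref{impo} and \ref{remCMC}); since the theorem's CMC part is stated as an existence assertion, this does not affect the proof of the statement as given.
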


At the end of this section, let us observe that the equivalence given in Corollary \ref{corTU} is not significant in $\htt$ as it is known (see \cite{CCLP}) that there are no totally umbilical surfaces in $\htt$. We shall then prove the following.

\begin{cor}\label{cor0}
Let $M$ be a constant angle surface with $\nu=0$ in $\htt$; then $M$ has constant mean curvature if and only if it is parallel.
\end{cor}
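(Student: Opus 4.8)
The plan is to play the criterion of Lemma~\ref{lem} against Corollary~\ref{cor} and part~(ii) of Proposition~\ref{prop4.1}. One direction is immediate: by Proposition~\ref{pcmc} every parallel surface is CMC, so it remains only to prove that a constant angle surface $M$ with $\nu=0$ and constant mean curvature is parallel.

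First I would set up an adapted frame. Since $\nu=0$, equation~\eqref{ottohtt} gives $\gt(T,T)=\delta$, and then $\gt(JT,JT)=-\e\gt(T,T)=-\e\delta$, so $\{T,JT\}$ is a pseudo-orthonormal frame on $M$. Reordering the two vectors if necessary (the degenerate combination $\delta=\e=-1$ does not occur, as noted in the discussion preceding the corollary), I obtain a frame $\{E_1,E_2\}$ with $\gt(E_1,E_1)=1$, $\gt(E_1,E_2)=0$, $\gt(E_2,E_2)=-\e$, exactly as required by Lemma~\ref{lem}. Reading the matrix of $S$ off part~(i) of Proposition~\ref{prop4.1} (and exchanging the roles of the basis vectors in the case $\delta=-1$), the shape operator in this frame has the form $\begin{pmatrix} S_{11} & S_{12}\\ -\e S_{12} & S_{22}\end{pmatrix}$ in which $S_{12}$ equals a nonzero constant multiple of $\tau$ and one of the two diagonal entries vanishes identically while the other equals the function $\mu$.

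Next I would insert $\nu=0$ into part~(ii) of Proposition~\ref{prop4.1}: this gives $\nabla_X E_i=0$ for $X,E_i\in\{T,JT\}$, and in particular $\gt(\nabla_X E_1,E_2)=0$ for every tangent vector $X$. Hence the three conditions~\eqref{parallel} of Lemma~\ref{lem} collapse to $X(S_{11})=X(S_{12})=X(S_{22})=0$, i.e. to the statement that all entries of $S$ are constant. By Corollary~\ref{cor}, $M$ being CMC is equivalent to $\mu=2H$ being constant; since the remaining two entries of $S$ are already constant, the equations~\eqref{parallel} are satisfied and $M$ is parallel. This settles the nontrivial implication and finishes the proof.

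I do not expect a real obstacle here. The one conceptual point is to observe that $\nu=0$ forces the intrinsic Levi-Civita connection of $M$ to be flat with $\{T,JT\}$ parallel, after which Lemma~\ref{lem} reduces to ``$S$ has constant entries''; for a helix surface this is controlled entirely by $\mu$ and is therefore equivalent to the CMC condition by Corollary~\ref{cor}. The only mild care required is the sign bookkeeping when choosing the ordering of $\{T,JT\}$ so that it matches the hypotheses of Lemma~\ref{lem}, which is routine.
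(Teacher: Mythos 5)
Your proposal is correct and follows essentially the same route as the paper: one direction via Proposition~\ref{pcmc}, and for the converse the observation that $\nu=0$ makes $\nabla$ vanish on the frame $\{T,JT\}$, so the conditions~\eqref{parallel} reduce to the constancy of the entries of $S$, which by Proposition~\ref{prop4.1}(i) and Corollary~\ref{cor} is exactly the CMC condition $\mu=2H$. Your extra care with the ordering and normalization of the frame to match the hypotheses of Lemma~\ref{lem} (and the remark that $\delta=\e=-1$ cannot occur) is accurate and slightly more explicit than the paper's own proof.
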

\begin{proof}
From Corollary \ref{cor} we deduce that parallel $0$-angle surfaces are CMC, that is $\mu$ is constant. Conversely, if $\mu$ is constant, condition \eqref{parallel} hold, as $\nabla$ vanishes on a $0$-angle surface $M$ in both cases for $\delta=\pm 1$, therefore $M$ is paralellel.  
\end{proof}

\section{General helix surfaces in $\htt$}

In the rest of the paper we shall study helix surfaces in $\htt$ for some constant angle $\nu\neq 0$.

As we pointed out in the Introduction, constant angle surfaces in $\htt$ have been already studied by Onnis and Piu in \cite{OP}; however, their study did not cover all the possible cases for Lorentzian metrics on $H_3$. In particular, if we consider the metric given in \eqref{gonH}, they studied the case  corresponding to $\delta=-1$ (with a parameter $\tau$ that has an opposite sign).

For this reason, from now on, we shall consider always $\delta=1$ and we refer the reader to \cite{OP} for  the corresponding result for the \lq\lq more classical\footnote{The case for $\delta=-1$ is more classical in the sense that it is the restriction for $\kappa=0$ of the LBCV spaces studied firstly by Yildirim \cite{Y}, that is, the ones obtained by a Lorentzian fibration over a Riemannian surface.}\rq\rq \ case.

\subsection{Spacelike helix surfaces in $\htt$}
In this section we begin the study of general (i.e. not minimal) helix surfaces in $\htt$. Firstly let us consider the spacelike case, where $\e=-1$, therefore, from the equation \eqref{ottohtt} it follows that (up to the orientation of $N$) we can write $\nu= \sinh \vartheta\neq 0$, where $\vartheta > 0$ is called the \textit{hyperbolic angle function} between $N$ and $E_3$. As a consequence we obtain $\gt(T,T)=\cosh^2\vartheta\neq 0$.

Now, using $\delta=1$, $\e=-1$ and $\nu= \sinh \vartheta$ in Proposition \ref{prop4.1} we obtain that with respect to the tangent basis  $\{T, JT\}$, the matrix describing the shape operator is given by
        \begin{align}\label{S_space}
           S= \begin{pmatrix}
                    0 &-\tau\\
                    -\tau &\mu
            \end{pmatrix},
        \end{align}
for some smooth function $\mu$ on $M$ satisfying the equation
    \begin{equation}\label{starhtt}
        T(\mu)+\mu^2 \sinh \vartheta -4\tq \sinh^3 \vartheta =0.
    \end{equation}
    
Moreover, the Levi-Civita connection $\nabla$ of $M$ is described by
    \begin{align*}
            \nabla_T T= -2\tau \sinh \vartheta\, JT, \qquad \nabla_{JT}T=\mu  \sinh \vartheta\,  JT, \\
            \nabla_T JT= 2\tau \sinh \vartheta\,  T, \qquad \nabla_{JT}JT=- \mu  \sinh \vartheta\,  T;
     \end{align*}
and the Gaussian curvature of $M$ is constant and is given by
$$
K=-4\tq  \sinh^2 \vartheta.
$$

As we know that $\gt(E_3, N)=- \sh $ and that $\delta=1$, there exists a smooth function $\varphi$ on $M$ such that
 \begin{equation}\label{NN}
  N=\ch \left(\sph E_1+ \cph E_2\right)-\sh E_3,
\end{equation}
then:
    \begin{align}\label{TS}
        &T=E_3-\sh N=-\sh\ch \left(\sph E_1+ \cph E_2 \right)+\cosh^2\vartheta E_3,\\
        &JT=N\wedge T=N\wedge E_3=\ch (\cph E_1 + \sph E_2).\label{JTS}
    \end{align}
Moreover, using \eqref{LCivitahtt} wit the expressions given in  \eqref{NN}, \eqref{TS} and \eqref{JTS} we can prove the following
    \begin{align*}
        &ST=-\nt_T N=-\left(T(\varphi)+\tau \cosh^2 \vartheta + \tau \sinh^2 \vartheta\right) \,JT,\\
        &SJT=-\nt_{JT}N=-JT(\varphi)\,JT-\tau\, T
    \end{align*}
and, comparing with \eqref{S_space}, we get:
    \begin{align}\label{TJT}
    \begin{cases}
    T(\varphi)=-2\tau\sinh^2\vartheta,\\
    JT(\varphi)=-\mu.
    \end{cases}
    \end{align}
Next, observe that the compatibility equation, namely
$$
[T,JT](\varphi)=\left(\nabla_T JT-\nabla_{JT}T\right)(\varphi)
$$
is equivalent to \eqref{starhtt}.

As we are interested in describing explicitly the surface, we now choose local coordinates $(u,v)$ on $M$ such that
    \begin{align}\label{coo1htt}
        \partial_u=T, \quad \partial_v=aT+bJT
    \end{align}
where $a, b$ are smooth functions on $M$. The condition $0=[\partial_u,\partial_v]$ leads to:
    \begin{align}\label{abeqs}
        \begin{cases}
            a(u,v)_u=-2\tau\,\sh\, b(u,v)\\
            b(u,v)_u=\mu\, \sh\, b(u,v) .
        \end{cases}
    \end{align}
In conclusion, integrating \eqref{starhtt} we get
$$
        \mu(u,v)=2\tau \, \sh \, \tanh\left(2\tau \sinh^2 \vartheta \, u+\eta(v)\right).
$$

\begin{rem}\label{impo}
Observe that, if we require $\nu\neq 0$, from above we deduce that $\mu$ is never constant, i.e. there are no examples of spacelike CMC helix surfaces with $\nu\neq 0$.
\end{rem}

Now, since we only need one solution for \eqref{abeqs}, we can take for example
    \begin{align}\label{coo2htt}
        \begin{cases}
            a(u,v)=-\frac{\sinh \left(2\tau \sinh^2 \vartheta \, u+\eta(v)\right)}{\sh}\\
            b(u,v)=\cosh\left(2\tau \sinh^2 \vartheta \, u+\eta(v)\right).
        \end{cases}
    \end{align}
Therefore, from \eqref{TJT} we obtain
$$
        \begin{cases}
            \varphi_u(u,v)=-2\tau \sinh^2 \vartheta , \\
            \varphi_v(u,v)=0,
        \end{cases}
$$
that lead to $\varphi(u,v)=\varphi(u)=-2\tau \sinh^2 \vartheta \, u+c$, where $c$ is a real constant. We now prove the following.

\begin{theorem}

\label{ppvhtt}
Let $M$ be a helix spacelike surface in $\htt$
with constant hyperbolic angle $\vartheta$. Then, with respect to the local coordinates $(u,v)$ on $M$ defined in \eqref{coo1htt} and \eqref{coo2htt}, the position vector $F$ of $M$ in $\R^3$ is given by
\begin{align}\label{pvhtt}
\begin{split}
  F(u,v)=\Bigl (& \frac{\coth \vartheta}{2\tau} \cosh (u)+f_1(v),\frac{\coth \vartheta}{2\tau} \sinh (u)+f_2(v),\\ &\,\,\frac{\cosh^2\vartheta}{2} u-\frac{\coth \vartheta}{2\tau}\left(f_2(v)\cosh (u)-f_1(v)\sinh (u)\right) +f_3(v) \Bigr),
\end{split}
\end{align}
where $f_1, f_2, f_3$ satisfy:
\begin{align*}
f'_1(v)^2-f'_2(v)^2=\cosh^2 \vartheta, \qquad f'_3(v)=\tau \left(f_1(v)f'_2(v)-f_2(v)f'_1(v)\right).
\end{align*}
\end{theorem}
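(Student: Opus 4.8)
The plan is to reconstruct the immersion $F$ from the frame data already assembled, by writing the ambient coordinate vector fields in terms of $T$ and $JT$ and integrating. First I would express $E_1, E_2, E_3$ in the frame $\{T, JT, N\}$ by inverting the relations \eqref{NN}, \eqref{TS}, \eqref{JTS}; since $\{T/\ch, JT/\ch\}$ is (pseudo-)orthonormal on $M$ and $N$ is the unit normal, this inversion is linear algebra with the hyperbolic identities $\cosh^2\vartheta-\sinh^2\vartheta=1$. Using $\partial_u = T$ and $\partial_v = aT+bJT$ from \eqref{coo1htt} together with the explicit $a,b$ in \eqref{coo2htt} and the frame relations \eqref{E123} $\bigl(E_1=\partial_x+\tau y\,\partial_z$, $E_2=\partial_y-\tau x\,\partial_z$, $E_3=\partial_z\bigr)$, I obtain a first-order PDE system for $(x,y,z)(u,v)$. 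The key simplification is that $\varphi=\varphi(u)=-2\tau\sinh^2\vartheta\, u + c$ is known explicitly, so all coefficients in the $u$-equations are explicit functions of $u$ alone.

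The main computation is the $u$-integration. From $\partial_u F = T$ and \eqref{TS}, the $x$- and $y$-components satisfy $x_u = -\sh\ch\sph(\varphi(u))$, $y_u = -\sh\ch\cph(\varphi(u))$ with $\varphi'(u) = -2\tau\sinh^2\vartheta$; since $\frac{d}{du}\cos\!\big(\tfrac{\varphi(u)}{?}\big)$—more precisely, writing $\sph(\varphi)\varphi' $ as a total derivative—one checks that $x_u = \frac{\coth\vartheta}{2}\cdot(-2\tau\sinh^2\vartheta)\cdot\frac{\sph\varphi}{-2\tau\sinh^2\vartheta}$-type manipulation yields $x = \frac{\coth\vartheta}{2\tau}\cosh u + (\text{function of }v)$ after absorbing constants (here I would be careful that the argument reduces to $u$ up to the additive constant $c$, which can be killed by an ambient isometry / rotation in the $E_1E_2$-plane, or carried inside $f_1,f_2$). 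The $z$-component uses $E_3 = \partial_z$: from $z_u = (\text{coefficient of }\partial_z\text{ in }T) = \cosh^2\vartheta + \tau y\cdot x_u\text{-correction}$; collecting the $\tau(y\,x_u - x\,y_u)$ contribution coming from the $\partial_z$ parts of $E_1,E_2$ gives exactly the $-\frac{\coth\vartheta}{2\tau}(f_2\cosh u - f_1\sinh u)$ term plus $\frac{\cosh^2\vartheta}{2}u$, with $f_3(v)$ the integration constant. This is the step I expect to be the main obstacle: keeping track of the $\partial_z$-pieces of $E_1$ and $E_2$ (which involve the unknowns $x,y$ themselves) so that the $u$-integration closes, and recognizing the resulting expression as \eqref{pvhtt}.

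Finally I would derive the constraints on $f_1,f_2,f_3$ by imposing consistency with the $v$-derivative $\partial_v F = aT + bJT$ using the explicit $a,b$ from \eqref{coo2htt}. Differentiating \eqref{pvhtt} in $v$ and matching against $a\,T + b\,JT$ (with $T,JT$ from \eqref{TS}, \eqref{JTS}) produces: the spacelike induced-metric condition on the $v$-curve, $f_1'(v)^2 - f_2'(v)^2 = \cosh^2\vartheta$ (equivalently $g_\tau(\partial_v,\partial_v) = \ldots$ together with $g_\tau(\partial_u,\partial_v)$ and $\gt(T,T)=\cosh^2\vartheta$), and the $E_3$-component matching forces $f_3'(v) = \tau\bigl(f_1(v)f_2'(v) - f_2(v)f_1'(v)\bigr)$. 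Conversely, one checks directly that for any $f_1,f_2,f_3$ satisfying these two relations the map \eqref{pvhtt} is an immersion with $\partial_u = T$, $\partial_v = aT+bJT$, unit normal \eqref{NN}, and constant angle $\nu = \sinh\vartheta$, which establishes the equivalence. The bookkeeping in this last paragraph is routine once the frame inversion and the $u$-integration are in place; the only subtlety is discarding the additive constant $c$ in $\varphi$ via an ambient isometry so that the stated clean form \eqref{pvhtt} is reached.
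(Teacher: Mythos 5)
Your proposal follows essentially the same route as the paper: write $\partial_u F=T$ and $\partial_v F=aT+bJT$ in the coordinate basis via \eqref{TS}, \eqref{JTS}, \eqref{coo2htt} and \eqref{E123}, integrate the resulting first-order system in $u$ (tracking the $\tau(y\,x_u-x\,y_u)$ contribution in the $z$-component), read off the constraints on $f_1,f_2,f_3$ from the $v$-equations, and absorb the constant $c$ by the reparametrization $\varphi(u)\mapsto u$. The preliminary step of inverting the frame to express $E_1,E_2,E_3$ in terms of $\{T,JT,N\}$ is unnecessary (the paper works directly with $T,JT$ expressed in the $E_i$), but this does not affect correctness.
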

\begin{proof}
Let $F: \Omega \rightarrow \htt$, $(u,v)\mapsto F(u,v)=(x(u,v), y(u,v), z(u,v))$ denote the (local) immersion of the helix spacelike surface $M$ with local coordinates $(u,v)$ introduced in \eqref{coo1htt} and defined on an open set $\Omega$ of $\R^2$.

By \eqref{TS}, \eqref{JTS} and \eqref{coo2htt} we get
$$
\begin{array}{rl}
\partial_u F=&T=-\sh\ch \left(\sph (u)\, E_{1|F}+ \cph (u)\, E_{2|F} \right)+\cosh^2\vartheta \,E_{3|F},\\[6pt]
\partial_v F=& a(u,v)\, T+ b(u,v)\, JT=\\
	=&-a(u,v)\, \sh\ch \left(\sph (u)\, E_{1|F}+ \cph(u)\, E_{2|F} \right)+\cosh^2\vartheta\, E_{3|F}\\
        &+ b(u,v)\, \ch \left(\cph (u)\,E_{1|F} + \sph (u)\,E_{2|F}\right).
\end{array}
$$

Moreover, using the expressions for $E_1$, $E_2$ and $E_3$ in terms of the coordinate basis $\{\pa x, \pa y, \pa z\}$ of $\R^3$ as specified in \eqref{E123}, we can calculate
    \begin{align}\label{deuFhtt}
        \begin{array}{l}
        x(u,v)_u=- \sh \ch \sph(u)\\
        y(u,v)_u=- \sh \ch \cph(u)\\
        z(u,v)_u=\ch \left(\tau \sh \sph (u) y(u,v) - \tau \sh \cph(u) x(u,v)+\ch \right)
        \end{array}
    \end{align}
and
    \begin{align}\label{devFhtt}
        \begin{array}{rl}
        x(u,v)_v=\ch &\left(-a(u,v)\sh \sph (u) + b(u,v)\cph (u)\right)\\
        y(u,v)_v=\ch &\left(-a(u,v)\sh \cph (u) + b(u,v)\sph (u)\right)\\
        z(u,v)_v=\ch  &\Big[-\tau y(u,v)\left(-a(u,v)\sh \sph (u) +b(u,v) \cph (u)\right)\\
        &+ \tau x(u,v)\left( -a(u,v)\sh \cph (u) +b(u,v) \sph (u) \right)\\
        &+a(u,v) \ch \Big]
        \end{array}
    \end{align}
Then, by integration of \eqref{deuFhtt} we obtain 
\begin{align*}
\begin{array}{l}
x(u,v)=\dfrac{\coth \vartheta}{2\tau} \cph (u)+f_1(v),\\[6pt]
y(u,v)=\dfrac{\coth \vartheta}{2\tau} \sph (u)+f_2(v),\\[6pt]
z(u,v)=\dfrac{\cosh^2\vartheta}{2} u-\dfrac{\coth \vartheta}{2\tau}\left(f_2(v)\cph (u)-f_1(v)\sph (u)\right) +f_3(v),
\end{array}
\end{align*}
where $f_i(v)$, $i=1, 2, 3$ are some smooth functions defined on $M$ that are determined by the relations found in \eqref{devFhtt}, that is,
\begin{align}\label{32htt}
\begin{array}{rl}
f'_1(v)&=\ch \left(\sinh \left(2\tau \sinh^2 \vartheta \, u+\eta(v)\right) \sph (u) + \cosh\left(2\tau \sinh^2 \vartheta \, u+\eta(v)\right)\cph (u)\right)\\
&=\ch\, \cosh\left(\eta(v)+c \right),\\[6pt]
f'_2(v)&=\ch \left(\sinh \left(2\tau \sinh^2 \vartheta \, u+\eta(v)\right)\cph (u) + \cosh\left(2\tau \sinh^2 \vartheta \, u+\eta(v)\right)\sph (u)\right)\\
&=\ch\, \sinh\left(\eta(v)+c \right),\\[6pt]
f'_3(v)&=\coth\vartheta \left(f'_2(v)\cph(u)-f'_1(v)\sph(u)\right)+\tau \left(f_1(v)f'_2(v)-f_2(v)f'_1(v)\right)+a(u,v)\cosh^2\vartheta\\
&=\tau \left(f_1(v)f'_2(v)-f_2(v)f'_1(v)\right).
\end{array}
\end{align}

Therefore, one obtains equation \eqref{pvhtt} after the reparametrization $\varphi(u)\mapsto u$.
\end{proof}


\subsection{Timelike helix surfaces in $\htt$}
We now study timelike helix surfaces following the same approach used in the spacelike case. In this case, where $\e=1$, from the equation \eqref{ottohtt} it follows that (up to the orientation of $N$) we can write $\nu= \sin \vartheta\neq 0$, where $\vartheta$ is called, analogously, the \textit{hyperbolic angle function} between $N$ and $E_3$. Now, let us assume again that $\vartheta$ is constant.

Again, by mean of opportune substitution of parameters in Proposition \ref{prop4.1} we get that, with respect to the tangent basis  $\{T, JT\}$, the matrix describing the shape operator of the timelike helix surface $M$ is given by
        \begin{align}\label{S_time}
           S= \begin{pmatrix}
                    0 & \tau\\
                    -\tau &\mu
            \end{pmatrix},
        \end{align}
for some smooth function $\mu$ on $M$ that satisfies equation
    \begin{equation}\label{starbhtt}
        T(\mu)+\mu^2 \sin\vartheta-4\tq \sin^3 \vartheta =0.
    \end{equation}
The Levi-Civita connection $\nabla$ of $M$ is described by
    \begin{align*}
            \nabla_T T= -2\tau \sin\vartheta\, JT, \qquad \nabla_{JT}T=\mu  \sin\vartheta\,  JT, \\
            \nabla_T JT= - 2\tau \sin\vartheta\,  T, \qquad \nabla_{JT}JT= \mu  \sin\vartheta\,   T;
     \end{align*}
and the Gaussian curvature of $M$ is constant and is given by
$
K=4\tq  \sin^2 \vartheta;
$

Again, as we know that $\gt(E_3, N)= \sin\vartheta$; considering the causal character of $E_1$, $E_2$ and $E_3$ we obtain that there exists a smooth function $\varphi$ on $M$ such that:
    \begin{align}
        N=\cos\vartheta \left(\cph\, E_1+ \sph\, E_2\right)+\sin\vartheta\, E_3,
    \end{align}
then:
    \begin{align}\label{TT}
        &T=E_3-\sin\vartheta N=-\sin\vartheta \cos\vartheta \left(\cph\, E_1+\sph\, E_2\right)+ \cos^2\vartheta\, E_3,\\\label{JTT}
        &JT=N\wedge T=N \wedge E_3=\cos\vartheta \left(\sph\, E_1+ \cph\, E_2 . \right.
    \end{align}
Moreover, we can prove the following
    \begin{align*}
        &ST=-\nt_T N=-\left(T(\varphi)+\tau \cos^2 \vartheta - \tau \sin^2 \vartheta\right)\,JT,\\
        &SJT=-\nt_{JT}N=-JT(\varphi)\,JT+\tau\, T,
    \end{align*}
and, comparing with \eqref{S_time}, we find:
    \begin{align}\label{TJTt}
    \begin{cases}
        JT(\varphi)=-\mu \\ T(\varphi)=2\tau \sin^2 \vartheta,
    \end{cases}
    \end{align}
whose compatibility is equivalent to \eqref{starbhtt}.
We now choose local coordinates $(u,v)$ on $M$ as in \ref{coo1htt}, then the condition $0=[\partial_u,\partial_v]$ now leads to:
    \begin{align}
        \begin{cases}
            a(u,v)_u=2\tau \sin \vartheta\, b(u,v), \\
            b(u,v)_v=\mu(u,v)\, \sin \vartheta	\, b(u,v).
        \end{cases}
    \end{align}
In conclusion, integrating \eqref{starbhtt} we get
$$
        \mu(u,v)=2\tau \, \sin \vartheta\, \tanh\left(2\tau \sin^2 \vartheta \, u+\eta(v)\right).
$$

\begin{rem}\label{remCMC}
Observe again that, if we require $\nu\neq 0$, we obtain that $\mu$ is never constant, that is, there are no examples of timelike helix surfaces with $\nu\neq 0$ are never CMC. Therefore, from Remarks {\em \ref{nu0}} and {\em \ref{impo}}, we deduce that proper CMC constant angle surfaces described in Theorem {\em  \ref{main}} are the only CMC surfaces in the class of constant angle surfaces, i.e. the condition $\nu=0$ is necessary for the constancy of the mean curvature in this class of submanifolds.
\end{rem}

Now, since we are searching again just for one solution, let us take for example
    \begin{align}\label{coo2bhtt}
        \begin{cases}
            a(u,v)=-\dfrac{\sinh\left(2\tau \sin^2 \vartheta \, u+\eta(v)\right)}{\sin\vartheta}\\[6pt]
            b(u,v)=\cosh\left(2\tau \sin^2 \vartheta \, u+\eta(v)\right).
        \end{cases}
    \end{align}
Therefore we get
$$
        \begin{cases}
            \varphi(u,v)_u=2\tau \sin^2 \vartheta\\
            \varphi(u,v)_v=0
        \end{cases}
$$
that leads to $\varphi(u,v)=\varphi(u)2\tau \sin^2 \vartheta\, u+c$, where $c\in \mathbb{R}$ is a constant.

\begin{theorem}\label{ppvbhtt}
Let $M$ be a helix timelike surface in $\htt$
with constant hyperbolic angle $\vartheta$. Then, with respect to the local coordinates $(u,v)$ on $M$ defined in \eqref{coo1htt} and \eqref{coo2bhtt}, the position vector $F$ of $M$ in $\R^3$ is given by
\begin{align}\label{pvbhtt}
\begin{split}
  F(u,v)=\Bigl (&-\dfrac{\cot \vartheta}{2\tau} \sinh(u)+f_1(v),-\dfrac{\cot \vartheta}{2\tau} \cosh (u)+f_2(v),\\ & \dfrac{\cos^2\vartheta}{2} u-\dfrac{\cot \vartheta}{2\tau}\left(f_1(v)\cosh (u)-f_2(v)\sinh (u)\right) +f_3(v) \Bigr),
\end{split}
\end{align}
where $f_1, f_2, f_3$ satisfy:
\begin{align*}
f'_1(v)^2-f'_2(v)^2=-\cos^2 \vartheta, \qquad f'_3(v)=\tau \left(f_2(v)f'_1(v)-f_1(v)f'_2(v)\right).
\end{align*}
\end{theorem}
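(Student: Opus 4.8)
The plan is to run, essentially verbatim, the computation carried out for the spacelike case in the proof of Theorem \ref{ppvhtt}, with the hyperbolic functions appearing in the expressions for $N$, $T$, $JT$ adapted to the timelike setting. Write the local immersion as $F:\Omega\to\htt$, $(u,v)\mapsto F(u,v)=(x(u,v),y(u,v),z(u,v))$, with $(u,v)$ the coordinates fixed in \eqref{coo1htt} and \eqref{coo2bhtt} and $\varphi=\varphi(u)$, $\varphi'(u)=2\tau\sin^2\vartheta$. The first step is to compute $\partial_u F=T$ and $\partial_v F=a(u,v)\,T+b(u,v)\,JT$ by inserting the decompositions \eqref{TT}--\eqref{JTT} of $T$ and $JT$ in the orthonormal frame $\{E_1,E_2,E_3\}$, and then replacing $E_1,E_2,E_3$ by their expressions in the coordinate frame $\{\pa x,\pa y,\pa z\}$ of $\R^3$ according to \eqref{E123}. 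This produces, exactly as in \eqref{deuFhtt}--\eqref{devFhtt}, two first order PDE systems: one for $(x_u,y_u,z_u)$ depending on $u$ only, and one for $(x_v,y_v,z_v)$ which in addition involves $a,b$ and the unknowns $x,y$ themselves.

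Next I would integrate the $u$-system. The equations $x_u=-\sin\vartheta\cos\vartheta\cosh\varphi(u)$ and $y_u=-\sin\vartheta\cos\vartheta\sinh\varphi(u)$ integrate at once, using $\varphi'=2\tau\sin^2\vartheta$, to the first two components of \eqref{pvbhtt} up to two integration functions $f_1(v),f_2(v)$; substituting these into the $z_u$ equation and using $\cosh^2\varphi-\sinh^2\varphi=1$ to isolate the term linear in $u$, a further integration in $u$ gives the third component together with a function $f_3(v)$. After the harmless reparametrization $\varphi(u)\mapsto u$ one obtains precisely the form stated in \eqref{pvbhtt}.

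It then remains to read off the constraints on $f_1,f_2,f_3$ from the $v$-system. Differentiating the formulas for $x$ and $y$ with respect to $v$ and comparing with $x_v,y_v$, one finds — after applying the addition formulas for hyperbolic functions and the explicit $a(u,v),b(u,v)$ of \eqref{coo2bhtt} — that $f_1'(v)$ and $f_2'(v)$ are $\cos\vartheta$ times, respectively, $\sinh$ and $\cosh$ of one and the same function of $v$; in particular $f_1'(v)^2-f_2'(v)^2=-\cos^2\vartheta$. Feeding these, together with the formulas for $x,y$, into the $z_v$ equation, all the contributions depending on $u$ cancel and one is left with $f_3'(v)=\tau\bigl(f_2(v)f_1'(v)-f_1(v)f_2'(v)\bigr)$, which finishes the proof. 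The one step requiring genuine care, precisely as in the spacelike case, is this last cancellation in $z_v$: one must check that, after using $\varphi'=2\tau\sin^2\vartheta$ and the hyperbolic identities, every $u$-dependent term drops out, so that the right-hand side is indeed a function of $v$ alone.
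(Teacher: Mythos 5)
Your proposal follows essentially the same route as the paper's own proof: write $\partial_u F=T$ and $\partial_v F=aT+bJT$ in the frame \eqref{E123} via \eqref{TT}--\eqref{JTT} and \eqref{coo2bhtt}, integrate the $u$-system to obtain $F$ up to functions $f_1,f_2,f_3$ of $v$, and then extract the stated constraints (including the hyperbolic identity giving $f_1'^2-f_2'^2=-\cos^2\vartheta$ and the cancellation of all $u$-dependent terms in the $z_v$ equation) from the $v$-system, exactly as in \eqref{deuFbhtt}--\eqref{32bhtt}. The only point worth noting is that your sign for $f_3'$ agrees with the theorem's statement rather than with the paper's displayed relation \eqref{32bhtt}, a discrepancy internal to the paper that does not affect the validity of your argument.
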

\begin{proof}
Let now  $F: \Omega \rightarrow \htt$, $(u,v)\mapsto F(u,v)=(x(u,v), y(u,v), z(u,v))$ denote the (local) immersion of the helix timelike surface $M$ with local coordinates $(u,v)$ introduced above. Next, using again \eqref{E123} and \eqref{TT}, \eqref{JTT} and \eqref{coo2bhtt} we obtain
\begin{align}\label{deuFbhtt}
        \begin{array}{l}
        x(u,v)_u=-  \sin \vartheta \cos \vartheta \cph(u)\\
        y(u,v)_u=- \sin \vartheta \cos \vartheta \sph(u)\\
        z(u,v)_u=\cos \vartheta \left(\tau \sin\vartheta \cph (u) y(u,v) - \tau \sin\vartheta \sph(u) x(u,v)+ \cos \vartheta \right)
        \end{array}
    \end{align}
and
    \begin{align}\label{devFbhtt}
        \begin{array}{rl}
        x(u,v)_v=\cos \vartheta &\left(-a(u,v)\sin\vartheta \cph (u) + b(u,v)\sph (u)\right)\\
        y(u,v)_v=\cos \vartheta &\left(-a(u,v)\sin\vartheta \sph (u) + b(u,v)\cph (u)\right)\\
        z(u,v)_v=\cos \vartheta  &\Big[-\tau y(u,v)\left(-a(u,v)\sin\vartheta \cph (u) + b(u,v)\sph (u)\right)\\
        &+ \tau x(u,v)\left(-a(u,v)\sin\vartheta \sph (u) + b(u,v)\cph (u)\right)\\
        &+a(u,v) \cos \vartheta \Big]
        \end{array}
    \end{align}
Therefore, integrating \eqref{deuFbhtt}, we get
\begin{align*}
\begin{array}{l}
x(u,v)=-\dfrac{\cot \vartheta}{2\tau} \sph (u)+f_1(v),\\[6pt]
y(u,v)=-\dfrac{\cot \vartheta}{2\tau} \cph (u)+f_2(v),\\[6pt]
z(u,v)=\dfrac{\cos^2\vartheta}{2} u-\dfrac{\cot \vartheta}{2\tau}\left(f_1(v)\cph (u)-f_2(v)\sph (u)\right) +f_3(v),
\end{array}
\end{align*}
where $f_i(v)$, $i=1, 2, 3$ are some smooth functions defined on $M$ that are determined by the relations found in \eqref{devFbhtt}, namely
\begin{align}\label{32bhtt}
\begin{array}{rl}
f'_1(v)&=-\cos \vartheta\, \sinh\left(\eta(v)-c \right),\\[6pt]
f'_2(v)&=\cos \vartheta\, \cosh\left(\eta(v)-c \right),\\[6pt]
f'_3(v)&=\tau \left(f_1(v)f'_2(v)-f_2(v)f'_1(v)\right).
\end{array}
\end{align}

Again, one obtains equation \eqref{pvbhtt} after the reparametrization $\varphi(u)\mapsto u$.
\end{proof}


In conclusion, let us recall Remark \ref{remCMC}. Observe that, also for analogous surfaces described in \cite{OP}, since there $\delta=-1$ any constant angle surface that has a constant mean curvature shows, if exists, an angle $\nu=0$ (see Remark \ref{nu0}); thus we give the following general result.

\begin{theorem}
Let $M$ be a constant angle surface in $\htt$; then $M$ has constant mean curvature if and only if it is parallel.
\end{theorem}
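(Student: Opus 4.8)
The plan is to split the class of constant angle surfaces according to the value of the angle $\nu$ and use the classification results already established. The statement to prove is the equivalence: a constant angle surface $M$ in $\htt$ has constant mean curvature if and only if it is parallel. One implication is cheap: by Proposition \ref{pcmc}, every parallel surface in a three-dimensional Lorentzian manifold is CMC, so a parallel constant angle surface is automatically CMC. The substantive direction is the converse, and here the key observation (recorded in Remark \ref{remCMC}, together with Remarks \ref{nu0} and \ref{impo}) is that a constant angle surface in $\htt$ with $\nu\neq0$ can never be CMC. Indeed, integrating \eqref{starhtt} (resp. \eqref{starbhtt}) in the spacelike (resp. timelike) case forces $\mu(u,v)=2\tau\,\sh\,\tanh(2\tau\sinh^2\vartheta\,u+\eta(v))$ (resp. the analogous formula with $\sin\vartheta$), which is non-constant whenever $\vartheta\neq0$; and by Corollary \ref{cor}, CMC in the helix class means $\mu$ constant. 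For $\delta=-1$ the same conclusion is inherited from \cite{OP}, as pointed out in the text. Hence any CMC constant angle surface must have $\nu=0$.

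So the whole matter reduces to the $\nu=0$ case, and there I would simply invoke Corollary \ref{cor0}, which states precisely that a constant angle surface with $\nu=0$ in $\htt$ has constant mean curvature if and only if it is parallel. Stringing these together: if $M$ is a CMC constant angle surface, then by the above it must have $\nu=0$, and then Corollary \ref{cor0} gives that $M$ is parallel; conversely if $M$ is parallel it is CMC by Proposition \ref{pcmc}. This closes the equivalence.

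I do not expect a genuine obstacle here, since all the ingredients have been assembled in the preceding sections; the only point requiring a modicum of care is making explicit that the cases $\delta=\pm1$ and, within $\delta=1$, the spacelike and timelike subcases, have all been covered — the first by reference to \cite{OP}, the latter two by Theorems \ref{ppvhtt} and \ref{ppvbhtt} and the integration of the respective Riccati-type equations for $\mu$. One might also remark, for completeness, that the proper CMC constant angle surfaces of Theorem \ref{main} (those with $\nu=0$) are consistent with this statement: by Corollary \ref{cor0} they are exactly the parallel ones, matching the general claim. The proof is therefore essentially a bookkeeping argument, and the expected write-up is only a few lines long.

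\begin{proof}
If $M$ is parallel, then it has constant mean curvature by Proposition \ref{pcmc}. Conversely, suppose $M$ is a constant angle surface in $\htt$ with constant mean curvature. If $\nu\neq0$, then by Remark \ref{remCMC} (for $\delta=1$, in both the spacelike and timelike cases, via the integration of \eqref{starhtt} and \eqref{starbhtt}) and by the corresponding analysis in \cite{OP} together with Remark \ref{nu0} (for $\delta=-1$), the function $\mu$ is never constant; but by Corollary \ref{cor} a CMC helix surface has $\mu=2H$ constant, a contradiction. Hence $\nu=0$, and then Corollary \ref{cor0} yields that $M$ is parallel.
\end{proof}
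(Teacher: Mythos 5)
Your proposal is correct and follows essentially the same route as the paper: the forward implication via Proposition \ref{pcmc}, the reduction of the CMC case to $\nu=0$ via Remark \ref{remCMC} (with Remark \ref{nu0} and the reference to \cite{OP} covering $\delta=-1$), and the conclusion via Corollary \ref{cor0}. No gaps.
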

\begin{proof}
From Remark \ref{remCMC} and applying the above arguments to surfaces in \cite{OP} we deduce that for CMC constant angle surface the angle $\nu$ vanishes. To conclude, as parallel surfaces are CMC and Corollary \ref{cor0} holds for surfaces with constant angle $\nu=0$, we get the thesis.
\end{proof}

\subsection*{Acknowledgment}
The author would like to thank Prof. Giovanni Calvaruso for the valuable suggestions in the preparation of the manuscript.

\end{document}